\def\mid{|}
\newtheorem{theorem}{Theorem}[section]
\newtheorem{lemma}{Lemma}[section]
\newtheorem{corollary}{Corollary}[section]
\newtheorem{proposition}{Proposition}[section]
\newcommand{\Cov}{\operatorname{Cov}}
\newcommand{\Card}{\operatorname{Card}}
\newcommand{\AIC}{\mathtt{AIC}}
\newcommand{\BIC}{\mathtt{BIC}}
\newcommand{\GACV}{\mathtt{GACV}}
\newcommand{\IAIC}{\mathtt{IAIC}}
\newcommand{\IBIC}{\mathtt{IBIC}}
\newcommand{\IGACV}{\mathtt{IGACV}}
\newcommand{\QAMISE}{\mathtt{QA\mbox{-}MISE}}
\newcommand{\E}{\mathbb{E}}
\newcommand{\Prob}{\mathbb{P}}
\newcommand{\En}{\mathbb{E}_{n}}
\newcommand{\R}{\mathbb{R}}
\newcommand{\T}{\mathcal{T}}
\newcommand{\U}{\mathcal{U}}
\newcommand{\indep}{\mathop{\perp\!\!\!\!\perp}}
\begin{document}
\begin{frontmatter}

\title{Estimation in functional linear quantile~regression\thanksref{T1}}
\runtitle{Functional quantile regression}

\thankstext{T1}{Supported by the Grant-in-Aid for Young Scientists (B) (22730179) from the JSPS.}

\begin{aug}
\author[A]{\fnms{Kengo}~\snm{Kato}\corref{}\ead[label=e1]{kkato@hiroshima-u.ac.jp}}
\runauthor{K. Kato}
\affiliation{Hiroshima University}
\address[A]{Department of Mathematics \\
Graduate School of Science\\
Hiroshima University\\
1-3-1 Kagamiyama, Higashi-Hiroshima \\
Hiroshima 739-8526\\
Japan\\
\printead{e1}} 
\end{aug}

\received{\smonth{3} \syear{2012}}
\revised{\smonth{8} \syear{2012}}

%
\begin{abstract}
This paper studies estimation in functional linear quantile regression
in which the dependent variable is scalar while the covariate is a
function, and the conditional quantile for each fixed quantile index is
modeled as a linear functional of the covariate. Here we suppose that
covariates are discretely observed and sampling points may differ
across subjects, where the number of measurements per subject increases
as the sample size.
Also, we allow the quantile index to vary over a given subset of the
open unit interval, so the slope function is a function of two
variables: (typically) time and quantile index. Likewise, the
conditional quantile function is a function of the quantile index and
the covariate.
We consider an estimator for the slope function based on the principal
component basis.
An estimator for the conditional quantile function is obtained by a
plug-in method.
Since the so-constructed plug-in estimator not necessarily satisfies
the monotonicity constraint with respect to the quantile index,
we also consider a class of monotonized estimators for the conditional
quantile function.
We establish rates of convergence for these estimators under suitable
norms, showing that these rates are optimal in a minimax sense under
some smoothness assumptions on the covariance kernel of the covariate
and the slope function.
Empirical choice of the cutoff level is studied by using simulations.
\end{abstract}

%
\begin{keyword}[class=AMS]
\kwd{62G20}
\end{keyword}
\begin{keyword}
\kwd{Functional data}
\kwd{nonlinear ill-posed problem}
\kwd{principal component analysis}
\kwd{quantile regression}
\end{keyword}

\end{frontmatter}

\section{Introduction}\label{sec1}

Quantile regression, initially developed by the seminal work of \citet
{KB78}, is one of the most important statistical methods in measuring
the impact of covariates on dependent variables. An attractive feature
of quantile regression is that it allows us to make inference on the
entire conditional distribution by estimating
several different conditional quantiles. Some basic materials on
quantile regression and its applications are summarized in \citet{K05}.

This paper studies estimation in functional linear quantile regression
in which the dependent variable is scalar while the covariate is a
function, and the conditional quantile for each fixed quantile index is
modeled as a linear functional of the covariate. The model that we
consider is an extension of functional linear regression to the
quantile regression case. Here we suppose that covariates are
discretely observed and sampling points may differ across subjects,
where the number of measurements per subject increases as the sample size.
Also, we allow the quantile index to vary over a given subset of the
open unit interval, so the slope function is a function of two
variables: (typically) time and quantile index. Likewise, the
conditional quantile function is a function of the quantile index and
the covariate.
We consider the problem of estimating the slope function as well as the
conditional quantile function itself.
The estimator we consider for the slope function is based on the
principal component analysis (PCA).
Expanding the covariate and the slope function in terms of the PCA
basis, the model is transformed into a quantile regression model with
an infinite number of regressors.
Truncating the infinite sum by the first $m$ (say) terms, we may apply
a standard quantile regression technique to estimating the first $m$
coefficients in the basis expansion of the slope function at each
quantile index, where $m$ diverges as the sample size. In practice, the
population PCA basis is unknown, so it has to be replaced by a suitable
estimate for it. Once the estimator for the slope function is
available, an estimator for the conditional quantile function is
obtained by a plug-in method.
Since the so-constructed plug-in estimator not necessarily satisfies
the monotonicity constraint with respect to the quantile index,
we also consider a class of monotonized estimators for the conditional
quantile function.

In summary, we have the following three types of estimators in mind:
\begin{longlist}[(iii)]
\item[(i)] a PCA-based estimator for the slope function;
\item[(ii)] a plug-in estimator for the conditional quantile function;
\item[(iii)] monotonized estimators for the conditional quantile function.
\end{longlist}
We establish rates of convergence for these estimators under suitable
norms, showing that these rates are optimal in a minimax sense under
some smoothness assumptions on the covariance kernel of the covariate
and the slope function.

In practice, we have to choose the cutoff level empirically. We suggest
some criteria, namely, (integrated-)AIC, BIC and GACV, to choose the
cutoff level.
We study the performance of these criteria by using simulations. In our
limited simulation experiments, although none of these criteria clearly
dominated the others, (integrated-)BIC worked relatively stably.

Functional data have become increasingly important.
For data collected on dense grids, a traditional multivariate analysis
is not directly applicable since the number of grid points may be
larger than the sample size and the correlation\vadjust{\goodbreak} between distinct grid
points is potentially high.
\textit{Functional data analysis} views such data as realizations of
random functions and takes into account the functional nature of the data.
We refer the reader to \citet{RS05} for a comprehensive treatment on
functional data analysis.
Earlier theoretical studies in functional data analysis have focused on
functional linear mean regression models [see \citet{CH06},
\citeauthor{CFS99} (\citeyear{CFS99,CFS03}), \citet{YMW05,HH07,CKS09,JWZ09,YC10,DH11} and references
cited in these papers].\vadjust{\goodbreak} Among them, \citet{HH07} established fundamental
results in functional linear mean regression, deriving sharp rates of
convergence for a PCA-based estimator for the slope function under some
smoothness assumptions. Note that they assumed that covariates are
continuously observed.
Other than functional linear mean regression, \citet{MS05} developed
estimation methods for generalized functional linear models using
series expansions of covariates and slope functions.

While not many, there are some earlier papers on estimating conditional
quantiles with function-valued covariates.
\citet{CCS05} studied smoothing splines estimators for functional linear
quantile regression models, while their established rates are not sharp.
\citet{FRV05} considered nonparametric estimation of conditional
quantiles when covariates are functions, which is a different topic
than ours.
\citet{CM11} considered an ``indirect'' estimation of conditional
quantiles. They modeled the conditional distribution as the composition
of some (possibly unknown) link function and a linear functional of the
covariate. They first estimated the conditional distribution function
by adapting the method developed in \citet{MS05} and then estimated the
conditional quantile function by inverting the estimated conditional
distribution function. In the quantile regression literature, there are
two ways to estimate conditional quantiles. One is to directly model
conditional quantiles and estimate unknown parameters by minimizing
check functions. The other is to estimate conditional distribution
functions and invert them to estimate conditional quantile functions.
We refer to the former as a ``direct'' method while to the latter as an
``indirect'' method. The approach taken in this paper is classified
into a direct method, while that of \citet{CM11} is classified into an
indirect method. Note that although their method is flexible, they only
established consistency of the estimator.

Conditional quantile estimation offers a variety of fruitful
applications for data containing function-valued covariates.
A leading example in which conditional quantile estimation with
function-valued covariates is useful appears in analysis of growth data
[\citet{CM11}]. Suppose that we have a growth data set of girls' heights
between ages $1$ and $18$, say, where multiple measurements may occur
at some ages.
Use girl's growth history between ages $1$ and $12$ as a covariate, and\vadjust{\goodbreak}
her height at age $18$ as a response.
Then, conditional quantile estimation gives us an overall picture of
the predictive distribution of girl's height at age $18$
given her growth history between ages $1$ and $12$, which is more
informative than just knowing the mean response. In addition to growth
data, functional quantile regression has been applied in analysis of
ozone pollution data [\citet{CCS07}] and El Ni\~{n}o data [\citet
{FRV05}]. We believe that functional linear quantile regression
modeling is a benchmark modeling in conditional quantile estimation
when covariates are functions, just as linear quantile regression
modeling is so when covariates are vectors.

Our estimator for the slope function (at a fixed quantile index) can be
understood as a regularized solution to an empirical version of a {\em
nonlinear} ill-posed inverse problem that corresponds to the ``normal
equation'' in the quantile regression case, where the regularization is
controlled by the cutoff level. The paper is thus in part related to
the literature on statistical nonlinear inverse problems, which is
still an ongoing research area [see \citet{BHM04,HL07,LC10,CP11,GS11}].
On the other hand, in the mean regression case, the normal equation
becomes a \textit{linear} ill-posed inverse problem. \citet{HH07}
considered two regularized estimators for the slope function based on
the normal equation in the mean regression case.
Conceptually, the problems handled in our and their papers are
different in their nature: linearity and nonlinearity.

From a technical point of view, establishing sharp rates of convergence
for our estimators is challenging.
Our proof strategy builds upon the techniques developed in the
asymptotic analysis for $M$-estimators with diverging numbers of
parameters [see, e.g., \citet{HS00,BCF11}]. However, the additional
complication arises essentially because ``regressors'' here are
estimated ones and the estimation error has to be taken into account,
which requires some new techniques such as ``conditional'' maximal
inequalities and some careful moment calculations. Additionally, the
discretization error brings a further technical complication.

Finally, the setting here is similar to Section 3 of \citet{CKS09}:
covariates are densely but discretely observed, and the discretization
error is taken into account in the analysis. However, the paper does
not cover the case in which covariates are discretely observed with
measurement errors because of the technical complication. A formal
theoretical analysis in such a case is left to a future work.

The remainder of the paper is organized as follows. Section \ref
{method} presents the model and estimators. Section~\ref{main} gives
the main results in which we derive rates of convergence for the
estimators. Section~\ref{MC} discusses empirical choice of the cutoff level.
A~proof of Theorem~\ref{thm1} is given in Section~\ref{proof1}.
Some additional discussions, technical proofs, useful technical tools
and simulation results are provided in the Appendix. Due to the space
limitation, the Appendix is contained in the supplementary file [\citet{supp}].

\begin{notation*} Throughout the paper, we shall obey the following notation.
For $z \in\mathbb{R}^{k}$, let $\| z \|_{\ell^{2}}$ denote the
Euclidean norm of $z$. For any integer $k \geq2$, let $\mathbb
{S}^{k-1}$ denote the set of all unit vectors in $\mathbb{R}^{k}$:
$\mathbb{S}^{k-1} = \{ z \in\mathbb{R}^{p} \dvtx\| z \|_{\ell^{2}}
= 1 \}
$. For any $y,z \in\mathbb{R}$, let $y \vee z = \max\{ y,z \}$ and $y
\wedge z = \min\{ y,z \}$.
Let $1(\cdot)$ denote the indicator function. For any given (random or
nonrandom, scalar or vector) sequence $\{ z_{i} \}_{i=1}^{n}$, we use
the notation
\[
\En[ z_{i} ] = \frac{1}{n} \sum_{i=1}^{n}
z_{i},
\]
which should be distinguished from the population expectation $\E
[\cdot
]$. For any two sequences of positive constants $r_{n}$ and $s_{n}$, we write
$r_{n} \asymp s_{n}$ if the ratio $r_{n}/s_{n}$ is bounded and bounded
away from zero. Let $L_{2}[0,1]$ denote the usual $L_{2}$ space with
respect to the Lebesgue measure for functions defined on $[0,1]$. Let
$\| \cdot\|$ denote the $L_{2}$-norm: $\| f \|^{2} = \int_{0}^{1}
f^{2}(t) \,dt$. For any finite set $I$, $\Card(I)$ denotes the
cardinality of $I$.
\end{notation*}

\section{Methodology}
\label{method}

\subsection{Functional linear quantile regression modeling}\label{sec2.1}

Let $(Y,X)$ be a pair of a scalar random variable $Y$ and a random
function $X=(X(t))_{t \in\T}$ on a bounded closed interval $\T$ in
$\mathbb{R}$.
Without loss of generality, we assume $\T= [0,1]$.
By ``random function,'' we mean that $X(t)$ is a random variable for
each $t \in[0,1]$.
For our purpose of formulating a functional linear quantile regression
model, we need the existence of the regular conditional distribution of
$Y$ given $X$, to which end we assume a mild regularity condition on
the path property of $X$.
Let $D[0,1]$ denote the space of all c\`{a}dl\`{a}g functions on
$[0,1]$, equipped with the Skorohod metric [see \citet{B68}].
We assume that the map $t \mapsto X(t)$ is c\`{a}dl\`{a}g almost
surely. Equip $D[0,1]$ with the Borel $\sigma$-field. Then, $X$ can be
taken as a $D[0,1]$-valued random variable.
Since $D[0,1]$ is a Polish space, and the product space $\mathbb{R}
\times D[0,1]$ with the product metric is also Polish, the regular
conditional distribution of $Y$ given $X$ exists [see, e.g., \citet{Du02}, Theorem 10.2.2].

Let $Q_{Y|X}(\cdot\mid X)$ denote the conditional quantile function of
$Y$ given $X$. Let $\U$ be a given subset of $(0,1)$ that is away from
$0$ and $1$, that is, for some small $\epsilon\in(0,1/2)$, $\U
\subset[\epsilon, 1-\epsilon]$.
For each $u \in\U$, we assume that $Q_{Y | X}(u \mid X)$ can be
written as a linear functional of $X$, that is, for each $u \in\U$,
there exist a scalar constant $a(u) \in\R$ and a scalar function $
b(\cdot,u) \in L_{2}[0,1]$ such that
%
%
\begin{equation}
Q_{Y | X} (u \mid X) = a(u) + \int_{0}^{1}
b(t,u) X^{c}(t) \,dt,\qquad u \in\U, \label{model}
\end{equation}
where $X^{c} (t) = X(t) - \E[ X(t) ]$.
Typical examples of $\U\subset(0,1)$ are as follows:
(i)~$\U= \{ u \}$ (singleton); (ii) $\U= \{ u_{1},\ldots, u_{K} \}$
with \mbox{$0< u_{1} < \cdots< u_{K} < 1$} (finite set); (iii) $\U= [
u_{L},u_{U} ]$ with $0 < u_{L} < u_{U} < 1$ (bounded closed interval).
Formally, we allow for all these possibilities.

The model (\ref{model}) is a natural extension of standard linear
quantile regression models to function-valued covariates and was first
formulated in \citet{CCS05}.
In what follows, we consider estimating the slope function $(t,u)
\mapsto b(t,u)$ and the conditional quantile function $(u,x) \mapsto
Q_{Y|X}(u \mid x)$.

Before going to the next step, we briefly give two simple examples of
data generating processes that admit the conditional quantile
restriction (\ref{model}).

%
\begin{example}[(Linear location design)] Suppose that $(Y,X)$ obeys the
linear location design
\[
Y = c + \int_{0}^{1} \varrho(t) X(t) \,dt +
\varepsilon, \qquad \varepsilon\indep X,
\]
where $c$ is a constant and $\varrho(t)$ is a function in $L_{2}[0,1]$.
Let $F_{\varepsilon}$ denote the distribution function of $\varepsilon$
and denote by $F_{\varepsilon}^{-1}$ the quantile function of
$F_{\varepsilon}$. Then $(Y,X)$ obeys the conditional quantile restriction
\[
Q_{Y | X} (u \mid X ) = c + F^{-1}_{\varepsilon}(u) + \int
_{0}^{1} \varrho(t) X(t) \,dt, \qquad u \in(0,1).
\]
\end{example}

%
\begin{example}[(Linear location-scale design)] Suppose that $(Y,X)$
obeys the linear location-scale design
\[
Y = c_{1} + \int_{0}^{1}
\varrho_{1} (t) X(t) \,dt + \sigma(X) \varepsilon,\qquad \sigma(X) =
c_{2} + \int_{0}^{1}
\varrho_{2}(t) X(t) \,dt, \varepsilon\indep X,
\]
where $c_{1},c_{2}$ are constants and $\varrho_{1}(t),\varrho_{2}(t)$
are functions in $L_{2}[0,1]$. Suppose that $\sigma(X) > 0$ almost
surely. Denote by $F_{\varepsilon}^{-1}$ the quantile function of the
distribution of $\varepsilon$. Then $(Y,X)$ obeys the conditional
quantile restriction
\[
Q_{Y | X} (u \mid X ) = c_{1} + c_{2}
F^{-1}_{\varepsilon}(u) + \int_{0}^{1}
\bigl\{ \varrho_{1} (t) + \varrho_{2}(t) F_{\varepsilon}^{-1}(u)
\bigr\} X(t) \,dt, \qquad u \in(0,1).
\]
In this design, the slope function depends on the quantile index.
\end{example}

\subsection{Estimation strategy}\label{sec2.2}

We base our estimation strategy on the principal component analysis
(PCA). To this end, we additionally assume here that $\int_{0}^{1} \E[
X^{2}(t) ] \,dt < \infty$.
Define the covariance kernel $K(s,t) = \Cov(X(s),\break X(t))$. Then, by the
Hilbert--Schmidt theorem,
$K(s,t)$ admits the spectral expansion
\[
K(s,t) = \sum_{j=1}^{\infty}
\kappa_{j} \phi_{j}(s) \phi_{j}(t),
\]
where $\kappa_{1} \geq\kappa_{2} \geq\cdots\geq0$ are ordered
eigenvalues, and $\{ \phi_{j} \}_{j=1}^{\infty}$ is an orthonormal
basis of $L_{2}[0,1]$ consisting of eigenfunctions of the integral
operator from $L_{2}[0,1]$ to itself with kernel $K(s,t)$. We will
later assume that $\kappa_{j}$ are all positive and there are no ties
in $\kappa_{j}$, that is, $\kappa_{1} > \kappa_{2} > \cdots> 0$.
Since $\{ \phi_{j} \}_{j=1}^{\infty}$ is an orthonormal basis of
$L_{2}[0,1]$, we have the following expansions in $L_{2}[0,1]$:
\[
X^{c}(t) = \sum_{j=1}^{\infty}
\xi_{j} \phi_{j}(t),\qquad b(t,u) = \sum
_{j=1}^{\infty} b_{j}(u) \phi_{j}(t),
\]
where $\xi_{j}$ and $b_{j}(u)$ are defined by
\[
\xi_{j} = \int_{0}^{1}
X^{c}(t) \phi_{j}(t) \,dt,\qquad b_{j}(u) = \int
_{0}^{1} b(t,u) \phi_{j}(t) \,dt.
\]
The $\xi_{j}$ are called ``principal scores'' for $X$. The expansion
for $X^{c}$ is called the ``Karhunen--Lo\`{e}ve expansion.''
This leads to the expression $\int_{0}^{1} b(t,u) X^{c}(t)\,dt = \sum_{j=1}^{\infty} b_{j}(u)\xi_{j}$.
Then, the model (\ref{model}) is transformed into a quantile regression
model with an infinite number of ``regressors'':
%
%
\begin{equation}
Q_{Y|X} (u \mid X) = a(u) + \sum_{j=1}^{\infty}
b_{j} (u) \xi_{j},\qquad u \in\U. \label{qmodel}
\end{equation}
Note that $\E[ \xi_{j} ] = 0, \E[ \xi_{j}^{2} ] = \kappa_{j}$ and
$\E[
\xi_{j} \xi_{k} ] = 0$ for all $j \neq k$.

We first consider estimating the slope function $(t,u) \mapsto b(t,u)$.
To this end, we estimate the function $b(\cdot,u)$ for each $u \in\U$
and collect them to construct a final estimator for $(t,u) \mapsto b(t,u)$.
To explain the basic idea, suppose for a while that (i) $X$ were
continuously observable; and (ii) the covariance kernel $K(s,t)$ were known.
The problem then reduces to finding suitable estimates of the
coefficients $b_{j}(u)$.
Let $(Y_{1},X_{1}),\ldots,(Y_{n},X_{n})$ be independent copies of
$(Y,X)$. For each $i=1,\ldots,n$, let $\xi_{ij}$ be the principal scores
for $X_{i}$.
Pick any $u \in\U$. Then, a plausible approach to estimating $b(\cdot
,u)$ is to truncate $\sum_{j=1}^{\infty} b_{j}(u) \xi_{j}$ by $\sum_{j=1}^{m} b_{j}(u) \xi_{j}$ for some large $m$, and estimate only the
first $m$ coefficients $b_{1}(u),\ldots,b_{m}(u)$ using a standard
quantile regression technique.
Let $m = m_{n}$ be the ``cutoff'' level such that $1 \leq m \leq n-1$
and $m \to\infty$ as $n \to\infty$. Estimate $a(u)$ and the first $m$
coefficients $b_{1}(u),\ldots,b_{m}(u)$ of $b(\cdot,u)$ by
%
%
\begin{equation}
\qquad \bigl(\tilde{a}(u),\tilde{b}_{1}(u),\ldots,
\tilde{b}_{m}(u) \bigr) = \mathop{\arg\min}_{a,b_{1},\ldots,b_{m}} \En \Biggl[
\rho_{u} \Biggl(Y_{i} - a - \sum
_{j=1}^{m} b_{j} \xi_{ij}
\Biggr) \Biggr], \label{inf}
\end{equation}
where $\rho_{u} (y) = \{ u - 1(y \leq0) \} y$ is the check function
[\citet{KB78}]. Note that for $u=0.5$, $\rho_{0.5}(\cdot)$ is equivalent
to the absolute value function. Here recall that $\En[ z_{i} ] =
n^{-1} \sum_{i=1}^{n} z_{i}$ for any sequence $\{ z_{i} \}_{i=1}^{n}$.
The resulting estimator for the slope function $(t,u) \mapsto b(t,u)$
is given by
\[
\tilde{b}\dvtx(t,u) \mapsto\tilde{b}(t,u),\qquad \tilde{b}(t,u) = \sum
_{j=1}^{m} \tilde{b}_{j}(u)
\phi_{j}(t), \qquad t \in[0,1], u \in\U.
\]

However, this ``estimator'' is infeasible since (i) $X$ is usually
discretely observed; and (ii) $K(s,t)$ is unknown.
Because of (i), it is usually not possible to directly estimate the
covariance kernel $K(s,t)$ by the empirical one (since $\En[
(X_{i}(s)-\bar{X}(s))(X_{i}(t) - \bar{X}(t))]$ with $\bar{X}(t) =
n^{-1} \sum_{i=1}^{n} X_{i}(t) $ is unavailable for some $(s,t)$).
Similarly to \citet{CKS09}, we consider the following setting:
\begin{longlist}[(1)]
\item[(1)] For each $i=1,\ldots,n$, $X_{i}$ is observed only at
$L_{i}+1$ discrete points $0=t_{i1} < t_{i2} < \cdots< t_{i,L_{i}+1} =
1$, that is, we only observe $X_{i}(t_{ij}), j=1,\ldots,L_{i}+1$.
Typically, $\max_{1 \leq i \leq n} \max_{1 \leq l \leq L_{i}} (
t_{i,l+1} - t_{il} ) \to0$ as $n \to\infty$ is assumed.
\item[(2)] Based on the discrete observations, for each $i = 1,\ldots,
n$, construct an interpolated function $\hat{X}_{i} = (\hat
{X}_{i}(t))_{t \in[0,1]}$ for $X_{i}=(X_{i}(t))_{t \in[0,1]}$.
\end{longlist}
Here we shall use a simple interpolation rule (see also the later remark):
\[
\hat{X}_{i}(t) = \sum_{l=1}^{L_{i}}
X(t_{il}) 1 \bigl(t \in[t_{il},t_{i,l+1} )\bigr),
\qquad i=1,\ldots,n.
\]
The observation time points $t_{i1},\ldots,t_{i,L_{i}+1}$ (and $L_{i}$)
should be indexed by the sample size $n$, however, this is suppressed
for the notational convenience.
Suppose now that the interpolated functions $\hat{X}_{1},\ldots,\hat
{X}_{n}$ are obtained. Then, we may estimate the covariance kernel
$K(s,t)$ by
\[
\hat{K}(s,t) = \En \bigl[ \bigl(\hat{X}_{i}(s) - \bar{\hat {X}}(s)
\bigr) \bigl(\hat{X}_{i}(t) - \bar{\hat{X}}(t) \bigr) \bigr],
\]
where $\bar{\hat{X}}(t) = n^{-1} \sum_{i=1}^{n} \hat{X}_{i}(t)$. Let
$\hat{K}(s,t)= \sum_{j=1}^{\infty} \hat{\kappa}_{j} \hat{\phi}_{j}(s)
\hat{\phi}_{j}(t)$ be the spectral expansion of $\hat{K}(s,t)$, where
$\hat{\kappa}_{1} \geq\hat{\kappa}_{2} \geq\cdots\geq0$ are ordered
eigenvalues and $\{ \hat{\phi}_{j} \}_{j=1}^{\infty}$ is an orthonormal
basis of $L_{2}[0,1]$ consisting of eigenfunctions of the integral
operator from $L_{2}[0,1]$ to itself with kernel $\hat{K}(s,t)$.
Each principal score $\xi_{ij}$ is now estimated by
\[
\hat{\xi}_{ij} = \int_{0}^{1} \bigl(
\hat{X}_{i}(t) - \bar{\hat{X}}(t) \bigr) \hat{\phi}_{j}(t)
\,dt.
\]
Then, the coefficients $a(u)$ and $b_{1}(u),\ldots,b_{m}(u)$ are
estimated by
%
%
\begin{equation}\quad
\bigl(\hat{a}(u),\hat{b}_{1}(u),\ldots,\hat{b}_{m}(u)
\bigr) =\mathop{\arg\min}_{a,b_{1},\ldots,b_{m}} \En \Biggl[ \rho_{u}
\Biggl(Y_{i} - a - \sum_{j=1}^{m}
b_{j} \hat{ \xi}_{ij} \Biggr) \Biggr]. \label{qrproblem}
\end{equation}
The resulting estimator for the slope function $(t,u) \mapsto b(t,u)$
is given by
\[
\hat{b}\dvtx(t,u) \mapsto\hat{b}(t,u),\qquad \hat{b}(t,u) = \sum
_{j=1}^{m} \hat{b}_{j}(u) \hat{
\phi}_{j}(t),\qquad t \in[0,1], u \in\U.
\]

The optimization problem (\ref{qrproblem}) can be transformed into a
linear programming problem and can be solved by using standard
statistical softwares.
Once the estimator for the slope function is obtained, the conditional
$u$-quantile of $Y$ given $X = x$ for a given function $x = (x(t))_{t
\in[0,1]} \in L_{2}[0,1]$ is estimated by a plug-in method:
\[
\hat{Q}_{Y | X}(u \mid x) = \hat{a}(u) + \int_{0}^{1}
\hat{b}(t,u) \bigl(x(t) - \bar{\hat{X}}(t) \bigr)\,dt.
\]
Empirical choice of the cutoff level will be discussed in Section~\ref{MC}.

The basis $\{ \phi_{j} \}_{j=1}^{\infty}$ is called the (population)
PCA basis.
Alternatively, one may use other basis functions independent of the
data, such as Fourier and wavelet bases, in which case the analysis
becomes more tractable.
A potential drawback of using such basis functions is that, as
discussed in \citet{DH11}, using the ``first'' $m$ basis functions is
less motivated.
The PCA basis is a benchmark basis in functional data analysis, which
is the reason why the PCA basis is used in this paper.
Other estimation methods such as smoothing splines [\citet{CKS09}] and a
reproducing kernel Hilbert space approach [\citet{YC10}] could be
adapted in the quantile regression case, which is left as a future topic.

The interpolation rule used here may be replaced by any other
reasonable interpolation rule.
For example, a plausible alternative is to use
\[
\hat{X}_{i}^{\mathrm{mid}}(t) = \sum_{l=1}^{L_{i}}
\frac
{X(t_{il})+X(t_{i,l+1})}{2} 1 \bigl(t \in[t_{il},t_{i,l+1})\bigr ), \qquad
i=1, \ldots,n.
\]
It is not hard to see that the theory below also applies to this
interpolation rule. In practice, this interpolation rule may be more
recommended since it uses all the discrete observations
$X_{i}(t_{i1}),\ldots,X_{i}(t_{i,L_{i}+1})$.

Finally, we note that for any fixed $u \in\U$, our estimator $\hat
{b}(\cdot,u)$ can be understood as a regularized solution to an
empirical version of a nonlinear ill-posed inverse problem that
corresponds to
the ``normal equation'' in the quantile regression case. Due to the
space limitation, we shall discuss this connection to nonlinear
ill-posed inverse problems in Appendix A in the supplementary file [\citet{supp}].

\subsection{Monotonization}
\label{monotone}

Suppose in this section that $\U$ is a bounded closed interval: $\U=[
u_{L},u_{U} ]$ with $0 < u_{L} < u_{U} < 1$. The conditional quantile
function $Q_{Y|X}(u \mid x)$ is monotonically nondecreasing in $u$.
However, the plug-in estimator $\hat{Q}_{Y|X}(u \mid x)$ constructed is
not necessarily so. To circumvent this problem, we may monotonize the
map $u \mapsto\hat{Q}_{Y|X}(u \mid x)$ by one of the following three
methods: (i) rearrangement [\citet{CFG09}], (ii) isotonization [\citet
{BBBB72}], (iii) convex combination of (i) and (ii). Such methods are
explained in \citet{CFG09} in a general setup.
We briefly explain their basic ideas. Pick any $x \in L_{2}[0,1]$.

\begin{longlist}[(iii)]
\item[(i)] Rearrangement.
Define the function
\[
\hat{F}_{\U}(y \mid x) = \frac{1}{u_{U}-u_{L}} \int_{\U}
1 \bigl\{ \hat{Q}_{Y|X}(u \mid x) \leq y \bigr\} \,du.
\]
Then the map $y \mapsto\hat{F}_{\U}(y \mid x)$ is a proper
distribution function supported in $[\min_{u \in\U} \hat{Q}_{Y|X}(u
\mid x), \max_{u \in\U} \hat{Q}_{Y|X}(u \mid x)]$. The rearranged
estimator for\break $Q_{Y|X}(u \mid x)$ is defined by
\[
\hat{Q}^{*}_{Y|X}(u \mid x) = \hat{F}^{-1}_{\U}
\biggl(\frac{u -
u_{L}}{u_{U} - u_{L}} \Big| x \biggr).
\]
Clearly, the map $u \mapsto\hat{Q}_{Y|X}^{*}(u \mid x)$ is nondecreasing.

\item[(ii)] Isotonization. The isotonization is carried out by projecting the
original estimate $u \mapsto\hat{Q}_{Y|X}(u \mid x)$ on the set of
nondecreasing functions, typically by the ``pool adjacent violators''
algorithm. Denote by $\hat{Q}^{I}_{Y|X}(u \mid x)$ the isotonized estimator.

\item[(iii)] Convex combination of (i) and (ii). Take $\lambda\in[0,1]$.
Then, the convex combination $\hat{Q}^{\lambda}_{Y|X}(u \mid x) =
\lambda\hat{Q}^{*}_{Y|X}(u \mid x) + (1-\lambda) \hat{Q}_{Y|X}^{I}(u
\mid x)$ is nondecreasing in $u$.
\end{longlist}

By \citet{CFG09}, it is shown that any monotonized estimate [constructed
by using one of (i)--(iii)] is at least as good as the initial estimate
$\hat{Q}_{Y|X}(u \mid x)$ in the following sense: let $\hat
{Q}^{\dagger
}_{Y|X}(u \mid x)$ be any monotonized estimate for $Q_{Y|X}(u \mid x)$
given above.
Then, we have for all $q \in[1,\infty]$,
%
%
\begin{eqnarray}
\label{mono} && \biggl[ \int_{\U} \bigl| \hat{Q}^{\dagger}_{Y | X}(u
\mid x) - Q_{Y \mid
X}(u \mid x) \bigr|^{q} \,du \biggr]^{1/q}
\nonumber
\\[-8pt]
\\[-8pt]
\nonumber
&&\qquad\leq \biggl[ \int_{\U} \bigl|
\hat{Q}_{Y | X}(u \mid x) - Q_{Y \mid X}(u \mid x) \bigr|^{q}
\,du \biggr]^{1/q},
\end{eqnarray}
where the obvious modification is made when $q = \infty$.

\section{Rates of convergence}
\label{main}

In this section we derive rates of convergence for the estimators
defined in the previous section and argue their optimality.
We make the following assumptions. Let $C_{1} > 1$ be some sufficiently
large constant. First of all, we assume the following:

\begin{longlist}[(A1)]
\item[(A1)] $\{ (Y_{i},X_{i}) \}_{i=1}^{\infty}$ is i.i.d. with $(Y,X)$.
\item[(A2)] $\int_{0}^{1} \E[X^{4}(t)] \,dt \leq C_{1}$ and $\E[\xi_{j}^{4}] \leq C_{1} \kappa_{j}^{2}$ for all $j
\geq1$.
\end{longlist}

The i.i.d. assumption is conventional. It is beyond the scope of the
paper to extend the theory to dependent data. Note that \citet{HK10}
discussed weakly dependent functional data. Assumption (A2) is a mild
moment restriction. Here no moment condition on $Y$ is needed, so $\E[
Y ]$ may not exist.

\begin{longlist}[(A3)]
\item[(A3)] For some $\alpha> 1$, $C_{1}^{-1} j^{-\alpha} \leq
\kappa_{j} \leq C_{1} j^{-\alpha}$ and $\kappa_{j} - \kappa_{j+1}
\geq
C_{1}^{-1} j^{-\alpha-1}$ for all $j \geq1$.
\item[(A4)] For some $\beta> \alpha/2 + 1$, $\sup_{u \in\U} |
b_{j}(u) | \leq C_{1} j^{-\beta}$ for all $j \geq1$.
\item[(A5)] Let $F_{Y | X}(y | X)$ denote the conditional distribution
function of $Y$ given~$X$.
Then, for every realization of $X$, the map $y \mapsto F_{Y|X}(y | X)$
is twice continuously differentiable with $f_{Y | X} (y | X) = \partial
F_{Y|X}(y | X)/\partial y$ and\break
$f'_{Y|X}(y | X) = \partial f_{Y|X}(y | X)/\partial y$. Furthermore,
$f_{Y | X} (y | X) \vee| f_{Y|X}' (y | X) | \leq C_{1}$.
\item[(A6)] $\inf_{u \in\U} f_{Y|X}(Q_{Y | X}(u \mid X) | X) \geq
C_{1}^{-1}$.
\end{longlist}

Assumptions (A3) and (A4) are adapted from (3.2) and (3.3) of \citet
{HH07}. Assumption (A3) especially means that all $\kappa_{j}$ are
positive, which guarantees identification of the slope function.
In assumption (A3), $\alpha$ measures the smoothness of the covariance
kernel $K$, which also measures the difficulty of estimating the slope
function $(t,u) \mapsto b(t,u)$.
The second part of assumption (A3) is to require the spacings among
$\kappa_{j}$ not be too small, which ensures identifiability of
eigenfunctions $\phi_{j}$ and thereby sufficient estimation accuracy of
$\hat{\phi}_{j}$.
Note that the lower inequality for $\kappa_{j}$ follows essentially
from the condition on the difference, as $\kappa_{j} \to0$ ($j \to
\infty$) and hence $\kappa_{j} = \sum_{k=j}^{\infty}(\kappa_{k} -
\kappa_{k+1})$.
Assumption (A4) determines the ``smoothness'' of the function $t
\mapsto b(t,u)$.
The condition that $\beta> \alpha/2 + 1$ requires the function $t
\mapsto b(t,u)$ to be sufficiently smooth relative to $K$ uniformly in
$u \in\U$.
See \citeauthor{HH07} [(\citeyear{HH07}), page 74] for some related discussions on these assumptions.
Assumptions (A5) and (A6) are specific to the quantile regression case.
Both assumptions are standard in the quantile regression literature
when $X$ is a vector.
The role of assumption (A6) is to guarantee that the conditional
distribution function $F_{Y \mid X}(y \mid X)$ is not ``flat'' near
quantiles of interest, which is essential to our asymptotic study.

\begin{longlist}[(A7)]
\item[(A7)] For each $i=1,\ldots,n$, $X_{i}$ is observed only at
discrete points $0=t_{i1} < t_{i2} < \cdots< t_{i,L_{i}+1} = 1$.
Define $\Delta= \Delta_{n} = \max_{1 \leq i \leq n} \max_{1 \leq l
\leq L_{i}} ( t_{i,l+1} - t_{il} )$. Then, $\Delta\to0$ as $n \to
\infty$.
\item[(A8)] There exists a constant $\gamma\in(0,2]$ such that $\E[
( X(t) - X(s))^{2} ] \leq C_{1} | t-s |^{\gamma}$ for all $s,t \in[0,1]$.
\end{longlist}

Assumptions (A7) and (A8) are a set of sampling assumptions on $X_{i}$.
A~rate restriction will be imposed on $\Delta$.
Assumption (A7) at least requires that $\min_{1 \leq i \leq n} L_{i}
\to\infty$ as $n \to\infty$, which means that each set of discrete
points $t_{i1},\ldots,t_{i,L_{i}+1}$ has to be dense in $[0,1]$ as the
sample size grows. Assumption~(A8) is an additional assumption on the
smoothness of the covariance kernel as well as the mean function $t
\mapsto\E[ X(t) ]$.
Suppose in this discussion $\E[ X(t) ] = 0$ for all $t \in[0,1]$.
Then, for example, $\gamma=1$ if $K(s,t)$ is Lipschitz continuous and
$\gamma=2$ if $K(s,t)$ is twice continuously differentiable. The value
of $\gamma$ controls the discretization error.
Note that possible values of $\gamma$ depend on the value of $\alpha$.
Typically, if $\alpha$ is sufficiently large, (A8) is satisfied with
$\gamma=2$ (and vice versa). The relation between the smoothness of the
covariance (or more generally integral) kernel and the decay rate of
the associated eigenvalues is studied in \citet{RWW95} and \citet{FM09}
and the references therein. For example, the former paper shows that,
if $K$ verifies the ``Sacks--Ylvisaker condition'' of order $r$
with $r $
a nonnegative integer (see the original paper for the precise
description of the conditions), in which case (A8) is satisfied with
$\gamma=2$, then $\kappa_{j} \asymp j^{-2r-2}$ as $j \to\infty$.
Assumption (A8) is similar in spirit to (A2) of \citet{CKS09}, in which
they directly assumed some smoothness of the random function $t \mapsto
X(t)$ to deal with the discretization error [roughly speaking, their
$2\kappa$ corresponds to our $\gamma$ as (A2) of \citet{CKS09}
essentially assumes $X$ to be $\kappa$-H\"{o}lder continuous and in
that case $\E[ (X(t) - X(s))^{2} ] =O( |t-s|^{2\kappa})$].

Let $\mathcal{F} = \mathcal{F}(C_{1},\alpha,\beta,\gamma)$ denote the
set of all distributions of $(Y,X)$ compatible with assumptions
(A2)--(A6) and (A8) for given (admissible) values of $C_{1}, \alpha,
\beta$ and $\gamma$ (such that $\mathcal{F} \neq\varnothing$). The
following theorem, which will be proved in Section~\ref{proof1} below,
establishes rates of convergence for the slope estimator $(t,u) \mapsto
\hat{b}(t,u)$.

%
\begin{theorem}
\label{thm1}
Suppose that assumptions \textup{(A1)--(A8)} are satisfied. Take $m =m_{n} \asymp
n^{1/(\alpha+2\beta)}$. Then, we have
%
%
\begin{eqnarray} \label{rate}
&&\lim_{M \to\infty} \limsup_{n \to\infty} \sup_{F \in\mathcal{F}}
\Prob_{F} \biggl[ \sup_{u \in\U} \int_{0}^{1}
\bigl\{ \hat{b}(t,u) - b(t,u) \bigr\}^{2} \,dt
\nonumber
\\[-8pt]
\\[-8pt]
\nonumber
&&\hspace*{135pt}{}> M n^{-(2\beta
-1)/(\alpha+2\beta)}
\biggr] = 0,
\end{eqnarray}
provided that $(n \vee(\log n) m^{3\alpha+ 3}) \Delta^{\gamma} =
O(1)$ as $n \to\infty$.
\end{theorem}

Inspection of the proof of Theorem~\ref{thm1} shows that, when $X$ is
continuously observable, under assumptions (A1)--(A6), the rate of
convergence of the estimator based on the direct empirical covariance
kernel will be $n^{-(2\beta-1)/(\alpha+2\beta)}$.
The side condition that $(n \vee(\log n) m^{3\alpha+ 3}) \Delta^{\gamma} = O(1)$ is assumed to make the discretization error negligible.
This condition seems not quite restrictive.
For example, if $\beta\geq\alpha+ 3/2$ and $\gamma=2$, it is
satisfied as long as $\Delta= O((n\log n)^{-1/2})$, which seems to be
mild in view of some applications in functional data analysis.

The following theorem, which will be proved in Appendix B in the
supplementary file [\citet{supp}], establishes rates of convergence for $\hat
{Q}_{Y|X}(u \mid x)$.\vadjust{\goodbreak}
For notational convenience, define
\[
\mathcal{E}( \hat{Q}_{Y|X}, u ) = \int \bigl\{ \hat{Q}_{Y | X}(u
\mid x) - Q_{Y \mid X}(u \mid x) \bigr\}^{2} \,dP_{X}(x),
\]
where $P_{X}$ denotes the distribution of $X$ (defined on $D[0,1]$).

%
\begin{theorem}
\label{thm2}
Suppose that assumptions \textup{(A1)--(A8)} are satisfied. Take $m =m_{n} \asymp
n^{1/(\alpha+2\beta)}$. Then, we have
%
%
\begin{equation}
\lim_{M \to\infty} \limsup_{n \to\infty} \sup_{F \in\mathcal{F}}
\Prob_{F} \Bigl[ \sup_{u \in\U} \mathcal{E}(\hat{Q}_{Y|X},u)
> Mn^{-(\alpha+2\beta-1)/(\alpha+2\beta)} \Bigr] = 0, \label{rate2}
\end{equation}
provided that $(n \vee(\log n) m^{3\alpha+ 3}) \Delta^{\gamma} =
O(1)$ as $n \to\infty$.
\end{theorem}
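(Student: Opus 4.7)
The plan is to reduce $\mathcal{E}(\hat{Q}_{Y|X},u)$ to a weighted prediction-norm distance between $\hat{b}(\cdot,u)$ and $b(\cdot,u)$ and then exploit the eigenvalue weights $\kappa_{j}$ to obtain a strictly faster rate than the $L_{2}$ rate of Theorem \ref{thm1}. Write $\Delta_{a}(u)=\hat{a}(u)-a(u)$ and $R_{n}(u)=\int_{0}^{1}\hat{b}(t,u)(\E[X(t)]-\bar{\hat{X}}(t))\,dt$. Then for every $x\in L_{2}[0,1]$,
$$
\hat{Q}_{Y|X}(u\mid x)-Q_{Y|X}(u\mid x)=\Delta_{a}(u)+\int_{0}^{1}(\hat{b}(t,u)-b(t,u))x^{c}(t)\,dt+R_{n}(u).
$$
Since $R_{n}(u)$ is independent of $x$ and $\E[X^{c}(t)]=0$, integrating against $P_{X}$ and using the spectral expansion $K(s,t)=\sum_{j}\kappa_{j}\phi_{j}(s)\phi_{j}(t)$ yields
$$
\mathcal{E}(\hat{Q}_{Y|X},u)\leq 2\Delta_{a}(u)^{2}+2R_{n}(u)^{2}+\sum_{j=1}^{\infty}\kappa_{j}\langle\hat{b}(\cdot,u)-b(\cdot,u),\phi_{j}\rangle^{2},
$$
so the task reduces to a uniform-in-$u$ control of the $\kappa$-weighted sum of squared true-basis coefficients of the slope error.

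Next I would split
$$
\hat{b}(\cdot,u)-b(\cdot,u)=\sum_{j=1}^{m}(\hat{b}_{j}(u)-b_{j}(u))\hat{\phi}_{j}+\sum_{j=1}^{m}b_{j}(u)(\hat{\phi}_{j}-\phi_{j})-\sum_{j>m}b_{j}(u)\phi_{j},
$$
take inner products with $\phi_{j}$, and bound three contributions to the weighted sum. Under (A3)-(A4), the truncation piece contributes $\sum_{j>m}\kappa_{j}b_{j}(u)^{2}\lesssim m^{-(\alpha+2\beta-1)}$ uniformly in $u$. The basis-perturbation piece is controlled by the spectral bounds that relate $\|\hat{\phi}_{j}-\phi_{j}\|$ to $\|\hat{K}-K\|_{\op}$ and the spacings $\kappa_{j}-\kappa_{j+1}$, already assembled in the proof of Theorem \ref{thm1}; under (A7)-(A8), $\|\hat{K}-K\|_{\op}=O_{P}(n^{-1/2}+\Delta^{\gamma/2})$, and the resulting contribution to the $\kappa$-weighted sum is absorbed by the side condition $(n\vee(\log n)m^{3\alpha+3})\Delta^{\gamma}=O(1)$. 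The dominant term is $\sum_{j=1}^{m}\kappa_{j}(\hat{b}_{j}(u)-b_{j}(u))^{2}$. Using the Bahadur-type linearization of $\hat{b}_{j}(u)-b_{j}(u)$ derived from the first-order conditions for (\ref{qrproblem}) in the proof of Theorem \ref{thm1}, one has, up to negligible remainders, $\hat{b}_{j}(u)-b_{j}(u)\approx \kappa_{j}^{-1}\En[\{u-1(Y_{i}\le Q_{Y|X}(u\mid X_{i}))\}\hat{\xi}_{ij}]/\E[f_{Y|X}(Q_{Y|X}(u\mid X)\mid X)]$; since $\E[\hat{\xi}_{ij}^{2}]\asymp\kappa_{j}$ by (A2), this gives $\kappa_{j}(\hat{b}_{j}(u)-b_{j}(u))^{2}=O_{P}(n^{-1})$ per coordinate and $O_{P}(m/n)$ after summation, uniformly over $u\in\U$ by the same chaining argument already used for Theorem \ref{thm1}.

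With $m\asymp n^{1/(\alpha+2\beta)}$, both $m/n$ and $m^{-(\alpha+2\beta-1)}$ coincide at $n^{-(\alpha+2\beta-1)/(\alpha+2\beta)}$, yielding the claimed rate for the slope contribution. For the auxiliary terms, $\Delta_{a}(u)^{2}=O_{P}(n^{-1})$ uniformly in $u\in\U$ follows from the standard one-dimensional quantile-regression argument for the intercept conditional on the $\hat{b}_{j}(u)$; and $R_{n}(u)^{2}=O_{P}(n^{-1}+\Delta^{\gamma})$ follows from $\|\bar{\hat{X}}-\E[X(\cdot)]\|=O_{P}(n^{-1/2}+\Delta^{\gamma/2})$ (using (A2), (A8)) together with $\sup_{u\in\U}\|\hat{b}(\cdot,u)\|=O_{P}(1)$ by Theorem \ref{thm1} and (A4). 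The side condition absorbs the discretization contributions.

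The main obstacle is the uniform-in-$u$ control of $\sum_{j=1}^{m}\kappa_{j}(\hat{b}_{j}(u)-b_{j}(u))^{2}$ when the ``regressors'' $\hat{\xi}_{ij}$ are themselves estimated, so that the empirical Gram matrix $\En[\hat{\xi}_{ij}\hat{\xi}_{ik}]$ is not exactly diagonal with entries $\kappa_{j}$; the off-diagonal inflation now has to be quantified in the $\kappa_{j}$-weighted norm, which is more delicate than the unweighted control used for Theorem \ref{thm1}. Propagating the $\hat{\phi}_{j}-\phi_{j}$ perturbation and the discretization errors through the weighted sum, uniformly over $u\in\U$, is the chief technical point and leverages the Bahadur expansion and the spectral perturbation bounds already developed in the proof of Theorem \ref{thm1}.
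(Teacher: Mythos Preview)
Your decomposition is essentially the same as the paper's, but you have misidentified the ``main obstacle.'' The dominant term you isolate, $\sum_{j=1}^{m}\kappa_{j}(\hat{b}_{j}(u)-b_{j}(u))^{2}$, is \emph{exactly} $\|\hat{d}^{m}(u)-d^{m}(u)\|_{\ell^{2}}^{2}$ in the paper's reparametrization $\hat{d}_{j}=\kappa_{j}^{1/2}\hat{b}_{j}$, $d_{j}=\kappa_{j}^{1/2}b_{j}$. This was already shown to be $O_{P}(m/n)$ uniformly in $u\in\U$ as the central intermediate result of the proof of Theorem~\ref{thm1} (Lemma~\ref{lem: red} together with its verification). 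So no Bahadur-type linearization is needed at all; the bound you seek is available for free. The paper's proof simply writes $\sum_{j=1}^{m}(\hat{b}_{j}-b_{j})(u)\xi_{n+1,j}=\sum_{j=1}^{m}(\hat{d}_{j}-d_{j})(u)\eta_{n+1,j}$ and uses $\E[\eta_{n+1,j}\eta_{n+1,k}]=\delta_{jk}$ to conclude immediately.

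Your proposed coordinate-wise Bahadur representation $\hat{b}_{j}(u)-b_{j}(u)\approx\kappa_{j}^{-1}\En[\{u-1(Y_{i}\le Q_{Y|X}(u\mid X_{i}))\}\hat{\xi}_{ij}]/\E[f_{Y|X}(\cdots)]$ is not established anywhere in the paper and would be substantial additional work: the first-order condition involves the full Gram matrix $\En[\hat{\xi}_{ij}\hat{\xi}_{ik}]$, not its diagonal, and controlling the remainder uniformly in $j$ and $u$ (with the right $\kappa_{j}$ scaling) is precisely the difficulty you then worry about in your last paragraph. That entire detour is unnecessary. Likewise, the intercept term $(\hat{a}-a)^{2}(u)$ is already absorbed in $\|\hat{d}^{m}(u)-d^{m}(u)\|_{\ell^{2}}^{2}$ (since $d_{0}=a$), so there is no separate ``one-dimensional quantile-regression argument'' to run. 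The remaining pieces of your outline---truncation bias $\sum_{j>m}\kappa_{j}b_{j}^{2}(u)$, basis perturbation, and the centering term $R_{n}(u)$---match the paper's treatment.
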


For monotonized estimators, the following corollary directly follows in
view of~(\ref{mono}) and Theorem~\ref{thm2}.

%
\begin{corollary}
\label{cor1}
Let $\U$ be a bounded closed interval in $(0,1)$.
Suppose that all the assumptions of Theorem~\ref{thm2} are satisfied.
Let $\hat{Q}^{\dagger}_{Y|X}(u \mid x)$ be any monotonized estimator
for $Q_{Y|X}(u \mid x)$ given in Section~\ref{monotone}.
Then, we have
\[
\lim_{M \to\infty} \limsup_{n \to\infty} \sup_{F \in\mathcal{F}}
\Prob_{F} \biggl[ \int_{\U} \mathcal{E} \bigl(
\hat{Q}^{\dagger}_{Y|X},u \bigr) \,du > Mn^{-(\alpha+2\beta
-1)/(\alpha+2\beta)} \biggr] =
0.
\]
\end{corollary}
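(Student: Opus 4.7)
The plan is straightforward: apply the monotonization inequality (\ref{mono}) with $q=2$, integrate with respect to $P_X$ via Fubini, and then invoke Theorem \ref{thm2} to control the supremum. First I would specialize (\ref{mono}) to $q = 2$ and square both sides, obtaining, for every $x$ in the support of $P_X$,
\begin{equation*}
\int_{\U} | \hat{Q}^{\dagger}_{Y|X}(u \mid x) - Q_{Y|X}(u \mid x) |^{2} du \leq \int_{\U} | \hat{Q}_{Y|X}(u \mid x) - Q_{Y|X}(u \mid x) |^{2} du.
\end{equation*}

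Next I would integrate both sides against $dP_X(x)$. Since the integrand is non-negative, Fubini's theorem justifies interchanging the order of integration, giving
\begin{equation*}
\int_{\U} \mathcal{E}(\hat{Q}^{\dagger}_{Y|X}, u) \, du \leq \int_{\U} \mathcal{E}(\hat{Q}_{Y|X}, u) \, du \leq |\U| \cdot \sup_{u \in \U} \mathcal{E}(\hat{Q}_{Y|X}, u),
\end{equation*}
where $|\U|$ denotes the Lebesgue measure of $\U$, which is at most $1$ since $\U \subset (0,1)$. Applying Theorem \ref{thm2} to the supremum on the right-hand side then yields exactly the uniform-over-$\mathcal{F}$ stochastic bound at rate $n^{-(\alpha+2\beta-1)/(\alpha+2\beta)}$, and the conclusion of the corollary follows.

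There is no serious obstacle here: the corollary is essentially a bookkeeping step that combines the contraction property (\ref{mono}) of the monotonization operators with the previously established rate for the plug-in estimator. The only subtlety worth flagging is a measurability check — $\hat{Q}^{\dagger}_{Y|X}(u \mid x)$ must be jointly measurable in $(u,x)$ for Fubini to apply — but this is immediate from the explicit constructions of rearrangement, isotonization, and their convex combinations as given in \cite{CFG09}, each of which is a measurable operation applied to the already jointly measurable $\hat{Q}_{Y|X}(\cdot \mid \cdot)$.
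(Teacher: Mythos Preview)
Your proposal is correct and matches the paper's approach exactly: the paper states only that the corollary ``directly follows in view of (\ref{mono}) and Theorem \ref{thm2},'' and you have accurately spelled out the routine details of that deduction.
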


Here note that the rate $n^{-(\alpha+2\beta-1)/(\alpha+2\beta)}$
attained in estimating $Q_{Y | X}(u \mid x)$ is faster than the rate
$n^{-(2\beta-1)/(\alpha+2\beta)}$ attained in estimating $b(t,u)$.

In what follows, we discuss optimality of these rates.

%
\begin{proposition}
\label{prop1}
Suppose that assumptions \textup{(A1)--(A6)} and \textup{(A8)} are satisfied.
Let $\gamma$ be such that
%
%
\begin{equation}
0 < \gamma\cases{ < \alpha-1, &\quad $\mbox{if $\alpha\leq3$},$ \vspace*{2pt}
\cr
\leq2, &\quad $\mbox{if $\alpha> 3$}.$} \label{gamma}
\end{equation}
Then, there exists a constant $M > 0$ such that for $\mathcal
{F}=\mathcal{F}(C_{1},\alpha,\beta,\gamma)$,
%
%
\begin{eqnarray}\label{lower1}
&&\liminf_{n \to\infty} \inf_{\bar{b}} \sup_{F \in\mathcal{F}}
\Prob_{F} \biggl[ \sup_{u \in\U} \int_{0}^{1}
\bigl\{ \bar{b}(t,u) - b(t,u) \bigr\}^{2} \,dt
\nonumber
\\[-8pt]
\\[-8pt]
\nonumber
&&\hspace*{118pt}{}> M n^{-(2\beta
-1)/(\alpha+2\beta)}
\biggr] > 0,
\end{eqnarray}
where $\inf_{\bar{b}}$ is taken over all estimators for the slope
function $(t,u) \mapsto b(t,u)$ based on $(Y_{1},X_{1}),\ldots
,(Y_{n},X_{n})$. Similarly, there exists a constant $M > 0$ such that
for $\mathcal{F}=\mathcal{F}(C_{1},\alpha,\beta,\gamma)$,
\[
\liminf_{n \to\infty} \inf_{\bar{Q}_{Y|X}} \sup_{F \in\mathcal{F}}
\Prob_{F} \Bigl[ \sup_{u \in\U} \mathcal{E}(\bar{Q}_{Y|X},u)
> M n^{-(\alpha+ 2\beta-1)/(\alpha+2\beta)} \Bigr] > 0,
\]
where $\inf_{\bar{Q}_{Y|X}}$ is taken over all estimators for the
conditional quantile function $Q_{Y|X}\dvtx (u,x) \mapsto Q_{Y|X}(u
\mid
x)$ based on $(Y_{1},X_{1}),\ldots,(Y_{n},X_{n})$. When $\U$ is a
bounded closed interval in $(0,1)$, there exists a constant $M > 0$
such that for $\mathcal{F}=\mathcal{F}(C_{1},\alpha,\beta,\gamma)$,
\[
\liminf_{n \to\infty} \inf_{\bar{Q}_{Y|X}} \sup_{F \in\mathcal{F}}
\Prob_{F} \biggl[ \int_{\U} \mathcal{E}(
\bar{Q}_{Y|X},u)\,du > M n^{-(\alpha+ 2\beta-1)/(\alpha+2\beta)} \biggr] > 0,
\]
where the previous convention applies.
\end{proposition}

The side condition (\ref{gamma}) is a compatibility condition between
assumptions~(A3) and (A8). It is not addressed here whether this
condition is tight.
However, some restriction between $\alpha$ and $\gamma$ is required in
establishing lower bounds of rates of convergence to guarantee that the class
$\mathcal{F}(C_{1},\alpha,\beta,\gamma)$ is at least nonempty.
Proposition~\ref{prop1} shows that under this side condition the rates
established in Theorems~\ref{thm1},~\ref{thm2} and Corollary~\ref{cor1}
are indeed optimal in the minimax sense. A proof of Proposition \ref
{prop1} is given in Appendix C in the supplementary file [\citet{supp}].


\section{Empirical choice of the cutoff level}
\label{MC}

In this section we suggest three criteria to choose $m$, and
investigate their performance by simulations.\setcounter{footnote}{1}\footnote{Application of a
(weighted) Lasso type procedure could be an alternative in the $m$-selection.
Also, a thresholding rule for the estimated eigenvalues is a possible
option. Analysis of such procedures is left to a future work.}
We use a heuristic reasoning to derive selection criteria. Suppose that
$\U$ is a singleton: $\U= \{ u \}$. Suppose that there is no
truncation bias, that is, $b(t,u)= \sum_{j=1}^{m} b_{j} (u)\phi_{j}(t)$
and $Q_{Y|X}(u | X) = a(u) + \sum_{j=1}^{m} b_{j}(u) \xi_{j}$. Then,
the infeasible estimator $(\tilde{a}(u),\tilde{b}_{1}(u),\ldots
,\tilde
{b}_{m}(u))'$ defined by (\ref{inf}) can be regarded as a (conditional)
maximum likelihood estimator when the conditional distribution of $Y$
given~$X$ has the asymmetric Laplace density of the form
\[
f(y | X, u, \sigma) = \frac{u(1-u)}{\sigma} \exp \Biggl\{ - \frac
{1}{\sigma}
\rho_{u} \Biggl(y-a(u)- \sum_{j=1}^{m}
b_{j}(u) \xi_{j} \Biggr) \Biggr\},
\]
where $\sigma> 0$ is a scale parameter.
This suggests the following analogues of AIC and BIC in the present context:
\begin{eqnarray*}
\AIC(u) &=& \log \Biggl[ \frac{1}{n} \sum_{i=1}^{n}
\rho_{u} \Biggl(Y_{i}-\hat{a}(u)- \sum
_{j=1}^{m} \hat{b}_{j}(u) \hat{
\xi}_{ij} \Biggr) \Biggr] + \frac{(m+1)}{n},
\\
\BIC(u) &=& \log \Biggl[ \frac{1}{n} \sum_{i=1}^{n}
\rho_{u} \Biggl(Y_{i}-\hat{a}(u)- \sum
_{j=1}^{m} \hat{b}_{j}(u) \hat{
\xi}_{ij} \Biggr) \Biggr] + \frac{(m+1) \log n}{n}.
\end{eqnarray*}
See also \citeauthor{K05} [(\citeyear{K05}), Section 4.9.1] for some related discussion.
According to \citet{Y06}, we may also consider an analogue of GACV as follows:
\[
\GACV(u) = \frac{\sum_{i=1}^{n} \rho_{u}(Y_{i}-\hat{a}(u)-
\sum_{j=1}^{m} \hat{b}_{j}(u) \hat{\xi}_{ij})}{n-(m+1)}.
\]

When $\U$ is a bounded closed interval, define the integrated-AIC, BIC
and GACV as follows:
\begin{eqnarray*}
\IAIC&=& \int_{\U} \AIC(u) \,du,\qquad \IBIC= \int
_{\U} \BIC(u) \,du,\\
 \IGACV&=& \int_{\U}
\GACV(u) \,du.
\end{eqnarray*}
When $\U$ is a set of finite grid points, each integral is replaced by
the summation over the grid points.

We carried out a small Monte Carlo study to investigate the finite
sample performance of these criteria.
In all cases, the number of Monte Carlo repetitions was $1000$.
The numerical results obtained in this section were carried out by
using the matrix language Ox [\citet{D02}]. The Ox code for solving
quantile regression problems supplied on Professor Koenker's website
was used. See also \citet{KP97} for some computational aspects of
quantile regression problems.

The simulation design is described as follows:
\begin{eqnarray}
Y &= &\int_{0}^{1} \varrho(t) X(t) \,dt +
\varepsilon,
\nonumber\\
\varrho(t)&= &\sum_{j=1}^{50}
\varrho_{j} \phi_{j}(t),\nonumber \\
\eqntext{\varrho_{1} = 0.3,
\varrho_{j} = 4 (-1)^{j+1} j^{-2}, j \geq2,
\phi_{j}(t) = 2^{1/2} \cos(j \pi t),}
\\
X(t)& =& \sum_{j=1}^{50}
\gamma_{j} Z_{j} \phi_{j}(t), \nonumber\\
\eqntext{\gamma_{j} = (-1)^{j+1} j^{-\alpha/2}, \alpha\in\{ 1.1, 2
\}, Z_{j} \sim U \bigl[-3^{1/2},3^{1/2} \bigr],}
\\
\varepsilon&\sim &N(0,1) \mbox{ or } \mbox{Cauchy},\qquad n \in\{ 100, 200, 500
\}.\nonumber
\end{eqnarray}
This $\alpha$ corresponds to $\alpha$ in assumption (A3). Each $X_{i}$
is observed at $201$ equally spaced grid points on $[0,1]$.
In this design, we have
\[
Q_{Y|X}(u \mid X) = F_{\varepsilon}^{-1}(u) + \int
_{0}^{1} \varrho(t) X(t) \,dt,
\]
where $F_{\varepsilon}^{-1}(\cdot)$ is the quantile function of the
distribution of $\varepsilon$. Thus, $a(u) = F_{\varepsilon}^{-1}(u)$
and $b(t,u) \equiv\varrho(t)$ [$b(t,u)$ is independent of $u$].
We considered two cases for $\U$: (a) $\U= \{ 0.5 \}$ and (b) $\U= \{
0.15,0.2,\ldots,0.85 \}$. In each case, the performance was measured by
\begin{eqnarray*}
\QAMISE&=& \frac{1}{\Card(\U)}\sum_{u \in\U} \E \biggl[
\int_{0}^{1} \bigl\{ \hat{b}(t,u) - b(t,u) \bigr
\}^{2} \,dt \biggr] \quad \mbox{or}
\\
\QAMISE&=& \frac{1}{\Card(\U)}\sum_{u \in\U} \E \biggl[
\int \bigl\{ \hat{Q}_{Y|X}(u \mid x) - Q_{Y|X}(u \mid x) \bigr
\}^{2} \,dP_{X}(x) \biggr],
\end{eqnarray*}
where $P_{X}$ denotes the distribution of $X$ and $\QAMISE$ is the
abbreviation of ``quantile-averaged mean integrated squared error.''

The simulation results for case (a) are summarized in Figures \ref
{fig1}--\ref{fig4} and Table~\ref{table1}.
Figures~\ref{fig1} and~\ref{fig2} show the performance of the selection
criteria for the normal error case, while Figures~\ref{fig3} and \ref
{fig4} show that for the Cauchy error case.
In each figure, ``Fixed'' refers to the performance of the estimator
with fixed $m$. Table~\ref{table1} shows the average numbers of $m$
selected by three criteria. The table shows that as the sample size
increases, all the criteria tend to choose larger $m$, and as $\alpha$
increases, they tend to choose smaller $m$, which is consistent with
the theoretical requirement in the $m$-selection. Interestingly, it is
observed that the error distribution affects the performance of the
$m$-selection.

%
\begin{figure}

\includegraphics{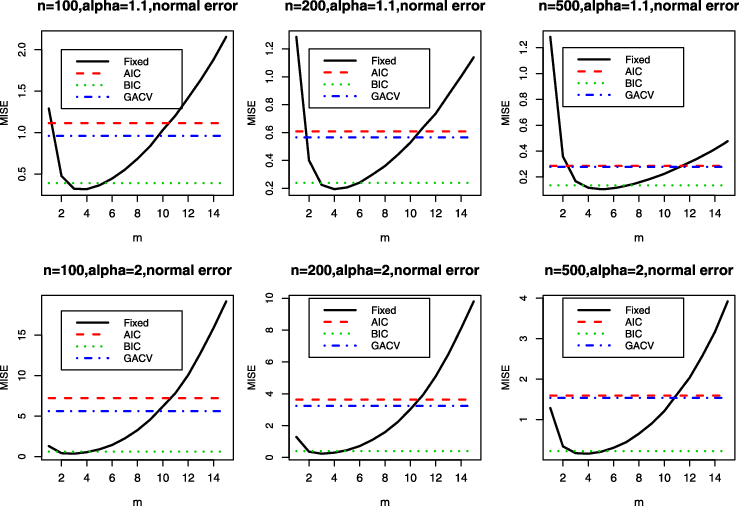}

\caption{Performance of selection criteria. Case \textup{(a)}.
Estimation of $b(\cdot,0.5)$.} \label{fig1}
\end{figure}

%
\begin{figure}

\includegraphics{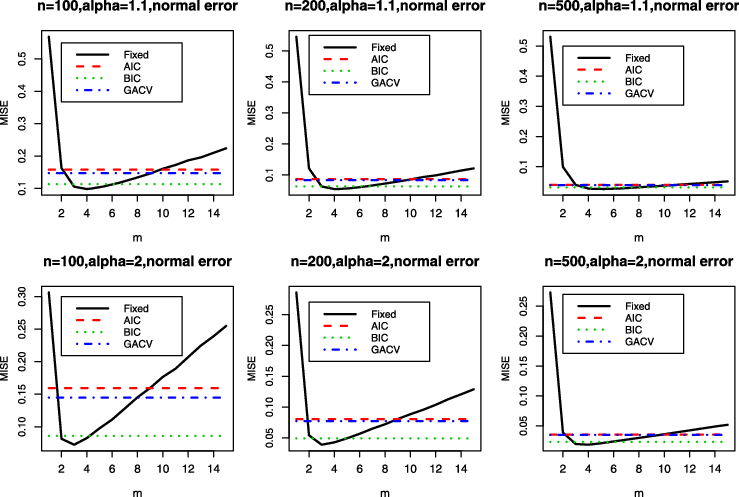}

\caption{Performance of selection criteria. Case \textup{(a)}.
Estimation of $Q_{Y|X}(0.5 \mid x)$. } \label{fig2}
\end{figure}

%
\begin{figure}

\includegraphics{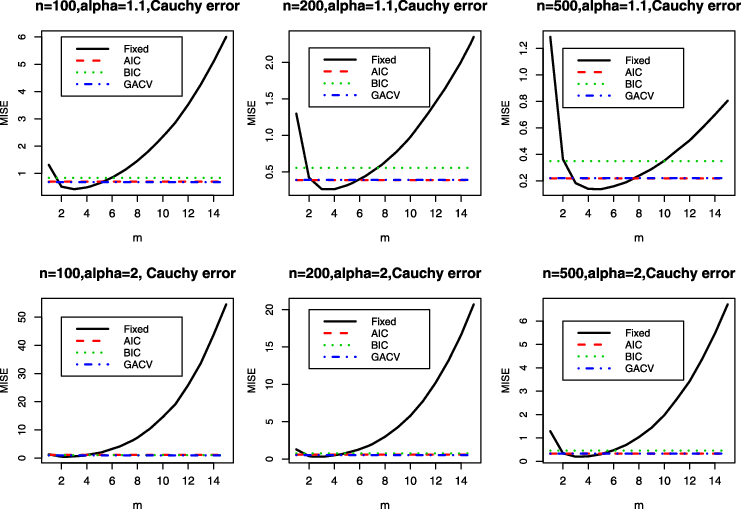}

\caption{Performance of selection criteria. Case \textup{(a)}.
Estimation of $b(\cdot,0.5)$. } \label{fig3}
\end{figure}

%
\begin{figure}

\includegraphics{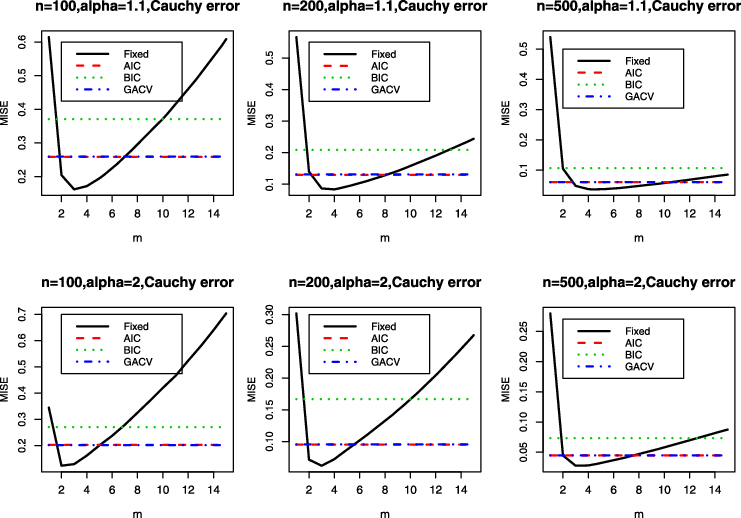}

\caption{Performance of selection criteria. Case \textup{(a)}.
Estimation of $Q_{Y|X}(0.5 \mid x)$.} \label{fig4}
\end{figure}

In the normal error case, in Figure~\ref{fig1}, BIC worked better than
the other two criteria. On the other hand, in the Cauchy error case, in
Figure~\ref{fig2}, AIC and GACV worked better than BIC. Looking closely
at these figures, one finds that AIC and GACV performed quite badly in
some cases (see the bottom half in Figure~\ref{fig1}). Although none of
these criteria clearly dominated the others, BIC worked relatively stably.
These figures also show that as $\alpha$ increases from $1.1$ to $2$,
the performance of $\hat{b}(\cdot,0.5)$ becomes worse, while that of
$\hat{Q}_{Y|X}(0.5 \mid x)$ becomes better. This is consistent with the
theoretical results in the previous section. Essentially similar
comments apply to case (b), Figures 1--4 and Table 1 in Appendix F in
the supplementary file [\citet{supp}].

%
\begin{table}
\caption{Average numbers of $m$ selected by three criteria in case \textup{(a)}.
Standard deviations are given in parentheses}\label{table1}
\begin{tabular*}{\textwidth}{@{\extracolsep{\fill}}lccccc@{}}
\hline
\multicolumn{1}{@{}l}{$\bolds{n}$} &
\multicolumn{1}{c}{$\bolds{\alpha}$} & \multicolumn{1}{c}{\textbf{error
dist.}} &
\multicolumn{1}{c}{\textbf{AIC}} & \multicolumn{1}{c}{\textbf{BIC}} &
\multicolumn{1}{c@{}}{\textbf{GACV}} \\
\hline
100 & 1.1 & normal & 7.40 (4.12) & 3.09 (1.24)& 6.58 (3.78)\\
200& 1.1 & normal & 7.34 (3.87) & 3.39 (1.14)& 6.98 (3.67)\\
500 & 1.1 & normal & 8.13 (3.59)& 3.94 (1.05)& 7.97 (3.52)\\
100 & 2\phantom{0.} & normal & 6.50 (4.23)& 2.53 (0.93)& 5.67 (3.84)\\
200 & 2\phantom{0.} & normal & 6.33 (3.95) & 2.67 (0.85) & 5.96 (3.76)
\\
500 & 2\phantom{0.} & normal & 6.92 (3.85) & 3.12 (0.77) & 6.81 (3.80)
\\
100 & 1.1 & Cauchy & 2.30 (1.18) & 1.62 (0.61) & 2.24 (1.08) \\
200 & 1.1 & Cauchy & 2.51 (0.88) & 1.91 (0.64) & 2.48 (0.84) \\
500 & 1.1 & Cauchy & 3.00 (0.87) & 2.20 (0.51) & 2.99 (0.86) \\
100 & 2\phantom{0.} & Cauchy & 1.90 (0.94) & 1.35 (0.51) & 1.86 (0.81)
\\
200 & 2\phantom{0.} & Cauchy & 2.14 (0.76) & 1.60 (0.53) & 2.10 (0.60)
\\
500 & 2\phantom{0.} & Cauchy & 2.41 (0.67) & 1.91 (0.38) & 2.40 (0.67)
\\
\hline
\end{tabular*}
\end{table}

\section{\texorpdfstring{Proof of Theorem \protect\ref{thm1}}{Proof of Theorem 3.1}}
\label{proof1}

We divide the proof into three subsections. Some technical results are
proved in Appendices D and E in the supplementary file [\citet{supp}].
To avoid the notational complication, uniformity in $F \in\mathcal{F}$
will be suppressed.
Let $C > 0$ denote a generic constant of which the value may change
from line to line.
In most cases, qualification ``almost surely'' will be suppressed.
In some parts of the proofs, we use empirical process techniques.
We follow the basic notation used in \citet{VW96}.

\subsection{Reduction of the problem}\label{sec5.1}

Let $b_{0}(u) = a(u), \hat{b}_{0}(u) = \hat{a}(u)$ and $\xi_{i0} =
\hat
{\xi}_{i0} = 1$.
For any $v_{0},\ldots,v_{m} \in\R$, write $v^{m} = (v_{0},\ldots,v_{m})'$.
For $v^{m} \in\mathbb{R}^{m+1}$ and $w^{m} \in\mathbb{R}^{m+1}$,
write $v^{m} \cdot w^{m} = \sum_{j=0}^{m} v_{j} w_{j}$. Then,
\[
\hat{b}^{m}(u) = \bigl(\hat{b}_{0}(u),
\hat{b}_{1}(u),\ldots,\hat{b}_{m}(u) \bigr)' =
\mathop{\arg\min}_{b^{m} \in\mathbb{R}^{m+1}} \En \bigl[ \rho_{u} \bigl(Y_{i} -
\hat{\xi}_{i}^{m} \cdot b^{m} \bigr) \bigr].
\]
We use a further re-parameterization. For $i=1,\ldots,n$ and $j \geq1$,
let $\eta_{ij} = \kappa_{j}^{-1/2} \xi_{ij}, \hat{\eta}_{ij} =
\kappa_{j}^{-1/2} \hat{\xi}_{ij}, d_{j} (u) = \kappa_{j}^{1/2}
b_{j}(u)$ and
$\hat{d}_{j} (u)= \kappa_{j}^{1/2} \hat{b}_{j}(u)$.
For $j = 0$, define $\eta_{i0} = \hat{\eta}_{i0} = 1$, $d_{0}(u) =
b_{0}(u)$ and $\hat{d}_{0}(u) = \hat{b}_{0}(u)$. Note that $\E[ \eta_{ij} ] = 0, \E[ \eta_{ij}^{2} ] = 1$ for all $j \geq1$, and $\E[
\eta_{ij} \eta_{ik} ] = 0$ for all $j,k \geq0$ with $j \neq k$. Then,
\[
\hat{d}^{m}(u) = \bigl(\hat{d}_{0}(u),
\hat{d}_{1}(u),\ldots,\hat{d}_{m}(u) \bigr)'=
\mathop{\arg \min}_{d^{m} \in\mathbb{R}^{m+1}} \En \bigl[ \rho_{u}
\bigl(Y_{i} - \hat{ \eta}_{i}^{m} \cdot
d^{m} \bigr) \bigr].
\]
We first consider bounding $\sup_{u \in\U} \| \hat{d}^{m}(u) -
d^{m}(u) \|_{\ell^{2}}$.

%
\begin{lemma}
\label{lemred}
Suppose that for all $\epsilon> 0$, there exist constants $c > 0$ and
$M > 0$ possibly depending on $\epsilon$ such that
\begin{eqnarray*}
&&\liminf_{n \to\infty} \Prob \bigl\{ - \En \bigl[ \bigl\{ u-1
\bigl(Y_{i} \leq\hat{\eta}_{i}^{m} \cdot
\bigl(d^{m}(u) + M \sqrt{m/n} h^{m} \bigr) \bigr) \bigr\}
\bigl(h^{m} \cdot\hat{\eta}_{i}^{m} \bigr) \bigr]
\\
&&\hspace*{147pt}\qquad > c \sqrt{m/n}\ \forall u \in\U, \forall h^{m} \in
\mathbb{S}^{m} \bigr\} > 1-\epsilon.
\end{eqnarray*}
Then, we have
\[
\limsup_{n \to\infty} \mathbb{P} \Bigl\{ \sup_{u \in\U} \bigl\| \hat
{d}^{m}(u) - d^{m}(u) \bigr\|_{\ell^{2}} > M \sqrt{m/n}
\Bigr\} \leq\epsilon.
\]
\end{lemma}

Hence, as soon as the hypothesis of Lemma~\ref{lemred} is verified,
we will have $\sup_{u \in\U} \| \hat{d}^{m}(u) - d^{m}(u) \|_{\ell
^{2}} = O_{P}(\sqrt{m/n})$, from which the desired conclusion follows
in a relatively straightforward manner.
We will verify the hypothesis of Lemma~\ref{lemred} in Section~\ref{sec5.2}
and complete the proof of Theorem~\ref{thm1} in Section~\ref{sec5.3}.

\begin{pf*}{Proof of Lemma~\ref{lemred}}
The proof is divided into three steps.

\textit{Step} 1: $\| \En[ \{ u-1(Y_{i} \leq\hat{\eta}_{i}^{m} \cdot\hat
{d}^{m}(u)) \} \hat{\eta}_{i}^{m}] \|_{\ell^{2}} \leq((m+1)/n) \max_{1
\leq i \leq n} \| \hat{\eta}_{i}^{m} \|_{\ell^{2}}$.

The proof is based on the next lemma.
%
%
\begin{lemma}
\label{lemGJ}
Let $\{ (y_{i},z_{i}')' \}_{i=1}^{n}$ be a sequence of pairs of
nonstochastic variables $(y_{i},z_{i}')'$ where $y_{i} \in\mathbb{R}$
and $z_{i} \in\mathbb{R}^{k}$.
Pick any $u \in(0,1)$. Let $\check{d}(u) \in\mathbb{R}^{k}$ be any
solution to the minimization problem
\[
\min_{d \in\mathbb{R}^{k}}\En \bigl[ \rho_{u} \bigl(y_{i} -
z_{i}'d \bigr) \bigr].
\]
Then, we have
\begin{eqnarray*}
&& \bigl\| \En \bigl[ \bigl\{ u-1 \bigl(y_{i} \leq z_{i}'
\check{d}(u) \bigr) \bigr\} z_{i} \bigr] \bigr\|_{\ell^{2}}
\\
&&\qquad\leq n^{-1} \Card \bigl( \bigl\{ i \in\{ 1,\ldots,n \} \dvtx
y_{i} = z_{i}'\check{d}(u) \bigr\} \bigr)
\max_{1 \leq i \leq n} \| z_{i} \|_{\ell^{2}}.
\end{eqnarray*}
\end{lemma}

\begin{pf}
The proof follows from a small modification of El-Attar, Vidyasa\-gar and Dutta [(\citeyear{EVD79}), Lemma 2.1].
\end{pf}

Recall that $\hat{\eta}_{i}^{m}$ depends only on $X_{1}^{n} := \{
X_{1},\ldots,X_{n} \}$ and not on $Y_{1},\ldots,Y_{n}$.
Since the conditional distribution of $Y_{1},\ldots,Y_{n}$ given
$X_{1}^{n}$ is absolutely continuous,
by Sard's theorem [see \citet{M65}, Section 3], $\sup_{u \in\U}
\Card
(\{ i \in\{ 1, \ldots, n \} \dvtx Y_{i} = \hat{\eta}_{i}^{m} \cdot
\hat
{d}^{m}(u) \}) \leq m+1$ almost surely. To be more precise, pick any
subset $I \subset\{ 1,\ldots, n \}$ such that $\Card(I) \geq m+2$.
Conditional on $X_{1}^{n}$, consider the set
\[
S_{I} = \bigl\{ \bigl(\hat{\eta}_{i}^{m} \cdot
\delta^{m} \bigr)_{i \in I} \dvtx\delta^{m} \in
\mathbb{R}^{m+1} \bigr\} \subset\mathbb{R}^{\Card(I)}.
\]
Then, $S_{I}$ is a linear subspace of dimension at most $m+1$. Suppose
that $Y_{i} = \hat{\eta}_{i}^{m} \cdot\hat{d}^{m}(u)$ for all $i
\in
I$ for some $u \in\U$.
Then, $(Y_{i})_{i \in I} \in S_{I}$, by which we have
%
%
\begin{equation}
\Prob \bigl\{ Y_{i} = \hat{\eta}_{i}^{m} \cdot
\hat{d}^{m}(u),\ \forall i \in I, \exists u \in\U\mid
X_{1}^{n} \bigr\} \leq\Prob \bigl\{ (Y_{i})_{i \in I}
\in S_{I} \mid X_{1}^{n} \bigr\}. \label{sard}
\end{equation}
However, by Sard's theorem, the Lebesgue measure of $S_{I}$ in $\mathbb
{R}^{\Card(I)}$ is zero, and by the absolute continuity of the
conditional distribution of $(Y_{i})_{i \in I}$ given $X_{1}^{n}$, the
right side of (\ref{sard}) is zero. Thus, we conclude that
\begin{eqnarray*}
&&\Prob \Bigl\{ \sup_{u \in\U} \Card \bigl( \bigl\{ i \in\{ 1, \ldots, n \}
\dvtx Y_{i} = \hat{\eta}_{i}^{m} \cdot
\hat{d}^{m}(u) \bigr\} \bigr) \geq m+2 \mid X_{1}^{n}
\Bigr\}
\\
&&\qquad\leq\mathop{\sum_{I \subset\{ 1,\ldots, n \}}}_{\Card(I) \geq m+2
} \Prob \bigl\{
Y_{i} = \hat{\eta}_{i}^{m} \cdot
\hat{d}^{m}(u)\ \forall i \in I, \exists u \in\U\mid
X_{1}^{n} \bigr\} = 0,
\end{eqnarray*}
by which we have $\sup_{u \in\U} \Card(\{ i \in\{ 1, \ldots, n \}
\dvtx Y_{i} = \hat{\eta}_{i}^{m} \cdot\hat{d}^{m}(u) \}) \leq m+1$
almost surely.
Then, the conclusion of Step 1 follows from an application of Lemma
\ref
{lemGJ}.

\textit{Step} 2: We have
%
%
\begin{equation}
\max_{1 \leq i \leq n} \bigl\| \hat{\eta}_{i}^{m}
\bigr\|_{\ell^{2}} = o_{P} \bigl\{ (\log n)^{-1} \sqrt{n/m}
\bigr\}. \label{max}
\end{equation}
We defer the proof of (\ref{max}) to Appendix D.

\textit{Step} 3: Proof of the lemma.

Define
\[
\hat{h}^{m}(u) = \cases{\displaystyle \frac{\hat{d}^{m}(u) - d^{m}(u)}{\| \hat
{d}^{m}(u) - d^{m}(u) \|_{\ell
^{2}}}, &\quad $
\mbox{if $ \hat{d}^{m}(u) \neq d^{m}(u)$},$ \vspace*{2pt}
\cr
0, & \quad $ \mbox{otherwise}.$}
\]
Then, by Steps 1 and 2, we have
\[
\sup_{u \in\U} \bigl| \En \bigl[ \bigl\{ u-1 \bigl(Y_{i} \leq\hat{
\eta}_{i}^{m} \cdot\hat{d}^{m}(u) \bigr) \bigr\}
\bigl(\hat{h}^{m}(u) \cdot\hat{\eta}_{i}^{m}
\bigr) \bigr] \bigr| = o_{P} (\sqrt{m/n}).
\]
Define the event
\begin{eqnarray*}
&&\mathcal{E}_{n} := \bigl\{ - \En \bigl[ \bigl\{ u-1
\bigl(Y_{i} \leq\hat{\eta}_{i}^{m} \cdot
\bigl(d^{m}(u) + M \sqrt{m/n} h^{m} \bigr) \bigr) \bigr\}
\bigl(h^{m} \cdot\hat{\eta}_{i}^{m} \bigr) \bigr]
\\
&&\hspace*{137pt}\qquad > c \sqrt{m/n}\ \forall u \in\U, \forall h^{m} \in
\mathbb{S}^{m} \bigr\}.
\end{eqnarray*}
Since the map
\[
l \mapsto- \En \bigl[ \bigl\{ u-1 \bigl(Y_{i} \leq\hat{
\eta}_{i}^{m} \cdot \bigl(d^{m}(u) + l \sqrt{m/n}
h^{m} \bigr) \bigr) \bigr\} \bigl(h^{m} \cdot\hat{
\eta}_{i}^{m} \bigr) \bigr]
\]
is nondecreasing for all $u \in\U$ and $h^{m} \in\mathbb{S}^{m}$, the
event $\mathcal{E}_{n}$ is also written as
\begin{eqnarray*}
&&\mathcal{E}_{n} = \bigl\{ - \En \bigl[ \bigl\{ u-1
\bigl(Y_{i} \leq\hat{\eta}_{i}^{m} \cdot
\bigl(d^{m}(u) + l \sqrt{m/n} h^{m} \bigr) \bigr) \bigr\}
\bigl(h^{m} \cdot\hat{ \eta}_{i}^{m} \bigr)
\bigr]
\\
&&\hspace*{88pt}\qquad > c \sqrt{m/n}\ \forall u \in\U, \forall l \geq M, \forall
h^{m} \in \mathbb{S}^{m} \bigr\}.
\end{eqnarray*}
Thus, as $n \to\infty$,
\begin{eqnarray*}
&&\mathbb{P} \Bigl\{ \sup_{u \in\U} \bigl\| \hat{d}^{m}(u) -
d^{m}(u) \bigr\|_{\ell^{2}} > M\sqrt{m/n} \Bigr\}
\\
&&\qquad=\mathbb{P} \bigl\{ \bigl\| \hat{d}^{m}(u) - d^{m}(u)
\bigr\|_{\ell^{2}} > M\sqrt{m/n}, \exists u \in\U \bigr\}
\\
&&\qquad\leq\mathbb{P} \bigl[ \bigl\{ \bigl\| \hat{d}^{m}(u) -
d^{m}(u) \bigr\|_{\ell^{2}} > M\sqrt{m/n}, \exists u \in\U \bigr\} \cap
\mathcal{E}_{n} \bigr] +\mathbb{P} \bigl(\mathcal{E}_{n}^{c}
\bigr)
\\
&&\qquad\leq\mathbb{P} \bigl\{ - \En \bigl[ \bigl\{ u-1 \bigl(Y_{i}
\leq \hat{ \eta}_{i}^{m} \cdot\hat{d}^{m}(u)
\bigr) \bigr\} \bigl(\hat{h}^{m}(u) \cdot\hat{\eta}_{i}^{m}
\bigr) \bigr] > c \sqrt{m/n}, \exists u \in\U \bigr\}
\\
&&\qquad\quad{} + \mathbb{P} \bigl(\mathcal{E}_{n}^{c}
\bigr)
\\
&&\qquad\leq o(1) + \bigl(1+o(1) \bigr) \epsilon.
\end{eqnarray*}
This completes the proof of Lemma~\ref{lemred}.
\end{pf*}

\subsection{\texorpdfstring{Verification of the hypothesis of Lemma \protect\ref{lemred}}
{Verification of the hypothesis of Lemma 5.1}}\label{sec5.2}

Pick any $h^{m} = (h_{0},\break h_{1},\ldots,  h_{m})' \in\mathbb{S}^{m}$. For a
given $M > 1$, let $\delta^{m} = (\delta_{0},\delta_{1},\ldots
,\delta_{m})' = M\sqrt{m/n} h^{m}$.
Then,
\begin{eqnarray*}
&&- \En \bigl[ \bigl\{ u-1 \bigl(Y_{i} \leq\hat{\eta}_{i}^{m}
\cdot \bigl(d^{m}(u) + \delta^{m} \bigr) \bigr) \bigr\}
\bigl(h^{m} \cdot\hat{\eta}_{i}^{m} \bigr) \bigr]
\\
&&\quad=- \En \bigl[ \bigl\{ u-1 \bigl(Y_{i} \leq Q_{Y|X}(u
\mid X_{i}) \bigr) \bigr\} \bigl(h^{m} \cdot\hat{
\eta}_{i}^{m} \bigr) \bigr]
\\
&&\qquad{} + \En \bigl[ \bigl\{ F_{Y|X} \bigl( \hat{
\eta}_{i}^{m} \cdot \bigl(d^{m}(u) +
\delta^{m} \bigr) \mid X_{i} \bigr) - F_{Y|X}
\bigl(Q_{Y|X}(u \mid X_{i}) \mid X_{i} \bigr) \bigr
\} \bigl(h^{m} \cdot\hat{\eta}_{i}^{m} \bigr)
\bigr]
\\
&&\qquad{} + n^{-1/2} \mathbb{G}_{n | X} \bigl[ \bigl\{ 1
\bigl(Y_{i} \leq\hat{\eta}_{i}^{m} \cdot
\bigl(d^{m}(u) + \delta^{m} \bigr) \bigr) - 1
\bigl(Y_{i} \leq Q_{Y|X}(u \mid X_{i}) \bigr) \bigr
\}\\
&&\hspace*{273pt}{}\times \bigl(h^{m} \cdot\hat{\eta}_{i}^{m} \bigr)
\bigr]
\\
&&\quad=: I + \mathit{II} + \mathit{III},
\end{eqnarray*}
where we have used the fact that $F_{Y|X} (Q_{Y|X}(u \mid X) \mid X) =
u$ and the last term (\textit{III}) is defined by
\begin{eqnarray*}
&&\hspace*{-4pt}n^{-1/2} \mathbb{G}_{n | X} \bigl[ \bigl\{ 1
\bigl(Y_{i} \leq\hat{\eta}_{i}^{m} \cdot
\bigl(d^{m}(u) + \delta^{m} \bigr) \bigr) - 1
\bigl(Y_{i} \leq Q_{Y|X}(u \mid X_{i}) \bigr) \bigr
\} \bigl(h^{m} \cdot\hat{\eta}_{i}^{m} \bigr)
\bigr]
\\
&&\hspace*{-4pt}\quad:= \En \bigl[ \bigl\{ 1 \bigl(Y_{i} \leq\hat{
\eta}_{i}^{m} \cdot \bigl(d^{m}(u) +
\delta^{m} \bigr) \bigr) - 1 \bigl(Y_{i} \leq
Q_{Y|X}(u \mid X_{i}) \bigr)
\\
&&\hspace*{16pt}\qquad{} - F_{Y|X} \bigl( \hat{\eta}_{i}^{m}
\cdot \bigl(d^{m}(u) + \delta^{m} \bigr) \mid
X_{i} \bigr) + F_{Y|X} \bigl(Q_{Y|X}(u \mid
X_{i}) \mid X_{i} \bigr) \bigr\} \bigl(h^{m}
\cdot\hat{ \eta}_{i}^{m} \bigr) \bigr].
\end{eqnarray*}
We separately bound the terms $I,\mathit{II}$ and $\mathit{III}$ uniformly in $u \in\U$
and $h^{m} \in\mathbb{S}^{m}$. In what follows, stochastic orders are
interpreted independent of $M$.
Note that
%
%
\begin{eqnarray}
\label{decomp} &&\hat{\eta}_{i}^{m} \cdot
\bigl(d^{m}(u) + \delta^{m} \bigr)- Q_{Y|X} (u \mid
X_{i})
\nonumber
\\
&&\qquad= \sum_{j=0}^{m}
\bigl(d_{j}(u) + \delta_{j} \bigr) \hat{\eta}_{ij}
- \sum_{j=0}^{\infty} \,d_{j}(u)
\eta_{ij}
\\
&&\qquad= \hat{\eta}_{i}^{m} \cdot
\delta^{m} + \bigl( \hat{\eta}^{m}_{i} -
\eta_{i}^{m} \bigr) \cdot d^{m}(u) - \sum
_{j=m+1}^{\infty} d_{j}(u)
\eta_{ij}\nonumber
\\
&&\qquad=: \hat{\eta}_{i}^{m} \cdot\delta^{m} +
\hat{r}_{i}(u).\nonumber
\end{eqnarray}

Bounding $I$: observe that
\[
I \geq- \bigl\| \En \bigl[ \bigl\{ u-1\bigl(Y_{i} \leq Q_{Y|X}(u \mid
X_{i})\bigr) \bigr\} \hat{\eta}_{i}^{m} \bigr]
\bigr\|_{\ell^{2}}.
\]
Using the relation
\begin{eqnarray*}
1 \bigl(Y_{i} \leq Q_{Y|X}(u \mid X_{i}) \bigr)
&=& 1 \bigl(F_{Y|X}(Y_{i} | X_{i}) \leq u \bigr)
\\
& =& 1(U_{i} \leq u) \qquad \mbox{with } U_{i} =
F_{Y|X}(Y_{i} | X_{i}),
\end{eqnarray*}
we have
\begin{eqnarray*}
\sup_{u \in\U} \bigl\| \En \bigl[ \bigl\{ u - 1 \bigl(Y_{i} \leq
Q_{Y|X}(u \mid X_{i}) \bigr) \bigr\} \hat{
\eta}_{i}^{m} \bigr] \bigr\|_{\ell^{2}}
\leq\sqrt{ \sum_{j=0}^{m}
\sup_{u \in\U} \bigl( \En \bigl[ \bigl\{ u-1(U_{i} \leq u) \bigr
\} \hat{\eta}_{ij} \bigr] \bigr)^{2} }.
\end{eqnarray*}
Here $U_{1},\ldots,U_{n}$ are independent uniform random variables on
$(0,1)$ independent of $X_{1}^{n} := \{ X_{1},\ldots,X_{n} \}$.
Pick and fix any $0 \leq j \leq m$. Let $\sigma_{1},\ldots,\sigma_{n}$
be independent Rademacher random variables independent of
$(U_{1},X_{1}),\ldots,\break(U_{n},X_{n})$.
Since $U_{1},\ldots,U_{n}$ are independent from $X_{1}^{n}$, applying
the symmetrization inequality [see Lemma 2.3.6 of \citet{VW96}]
conditional on $X_{1}^{n}$, we have
\[
\E \Bigl[ \sup_{u \in\U} \bigl( \En \bigl[ \bigl\{ u-1(U_{i}
\leq u) \bigr\} \hat{\eta}_{ij} \bigr] \bigr)^{2} \mid
X_{1}^{n} \Bigr] \leq4 \E \Bigl[ \sup_{u
\in\U} \bigl(
\En \bigl[ \sigma_{i} 1(U_{i} \leq u) \hat{
\eta}_{ij} \bigr] \bigr)^{2} \mid X_{1}^{n}
\Bigr].
\]
We make use of Proposition E.2 in Appendix E to bound the right side.
Consider the class of functions
\[
\mathcal{G} = \bigl\{ \R\times\R\ni(y,z) \mapsto1(y \leq u)z \dvtx u \in\U \bigr
\}.
\]
Then, we have
\[
\E \Bigl[ \sup_{u \in\U} \bigl( \En \bigl[ \sigma_{i}
1(U_{i} \leq u) \hat{\eta}_{ij} \bigr] \bigr)^{2}
\mid X_{1}^{n} \Bigr] = \E \Bigl[ \sup_{g
\in\mathcal{G}} \bigl(
\En \bigl[ \sigma_{i} g(U_{i},\hat{\eta}_{ij})
\bigr] \bigr)^{2} \mid X_{1}^{n} \Bigr].
\]
It is standard to see that $\mathcal{G}$ is a VC subgraph class with VC
index $\leq3$. Thus, by Theorem 2.6.7 of \citet{VW96}, there exist
universal constants $A \geq e$ and $W \geq1$ such that, for envelope
function $G(y,z) = | z |$,
\[
N \bigl( \epsilon\| G \|_{L_{2}(P_{n})}, \mathcal{G}, L_{2}(P_{n})
\bigr) \leq(A/\epsilon)^{W} \qquad 0 <\forall\epsilon\leq1,
\]
where $P_{n}$ denotes the empirical distribution on $\R\times\R$ that
assigns probability $n^{-1}$ to each $(U_{i},\hat{\eta}_{ij}),  i=1,\ldots,n$.
Therefore, by Proposition E.2, we conclude that
\[
\E \Bigl[ \sup_{u \in\U} \bigl( \En \bigl[ \sigma_{i}
1(U_{i} \leq u) \hat{\eta}_{ij} \bigr] \bigr)^{2}
\mid X_{1}^{n} \Bigr] \leq n^{-1}D \En \bigl[ \hat{
\eta}_{ij}^{2} \bigr],
\]
where $D$ is a universal constant. Since $0 \leq j \leq m$ is
arbitrary, we have
\[
\sup_{u \in\U} \bigl\| \En \bigl[ \bigl\{ u - 1 \bigl(Y_{i} \leq
Q_{Y|X}(u \mid X_{i}) \bigr) \bigr\} \hat{
\eta}_{i}^{m} \bigr] \bigr\|_{\ell^{2}} = O_{P}
\bigl\{ \bigl(\En \bigl[ \bigl\| \hat{\eta}_{i}^{m}
\bigr\|_{\ell^{2}}^{2} \bigr] \bigr)^{1/2} n^{-1/2}
\bigr\}.
\]
We shall show in Appendix D that
%
%
\begin{equation}
\En \bigl[ \bigl\| \hat{\eta}_{i}^{m} \bigr\|_{\ell^{2}}^{2}
\bigr] = O_{P}(m), \label{moment}
\end{equation}
by which we have
\[
\sup_{u \in\U} \bigl\| \En \bigl[ \bigl\{ u - 1 \bigl(Y_{i} \leq
Q_{Y|X}(u \mid X_{i}) \bigr) \bigr\} \hat{
\eta}_{i}^{m} \bigr]\bigr \|_{\ell^{2}} = O_{P}(
\sqrt{m/n}).
\]

Bounding $\mathit{II}$:
by Taylor's theorem, we have
\begin{eqnarray*}
&&F_{Y|X} \bigl(Q_{Y|X}(u \mid X) + y \mid X \bigr) -
F_{Y|X} \bigl(Q_{Y|X}(u \mid X) \mid X \bigr)
\\
&&\qquad= f_{Y|X} \bigl(Q_{Y|X}(u \mid X) \mid X \bigr) y +
y^{2} \int_{0}^{1}
f'_{Y|X} \bigl( Q_{Y|X}(u \mid X) + \theta y \mid
X \bigr) (1-\theta) \,d \theta
\\
&&\qquad=: f_{Y|X} \bigl(Q_{Y|X}(u \mid X) \mid X \bigr) y +
\frac{y^{2}}{2} R(u,y,X),
\end{eqnarray*}
by which we have, using (\ref{decomp}),
%
%
\begin{eqnarray}
\label{lbound} \mathit{II} &=&\En \bigl[ f_{Y|X} \bigl(Q_{Y|X}(u \mid
X_{i}) \mid X_{i} \bigr) \bigl(\hat{\eta}_{i}^{m}
\cdot\delta^{m} + \hat{r}_{i}(u) \bigr)
\bigl(h^{m} \cdot\hat{\eta}_{i}^{m} \bigr) \bigr]
\nonumber
\\
&&{} + \frac{1}{2} \En \bigl[ \bigl(\hat{\eta}_{i}^{m}
\cdot\delta^{m} + \hat{r}_{i} \bigr)^{2}R
\bigl(u, \hat{\eta}_{i}^{m} \cdot\delta^{m} +
\hat {r}_{i}(u),X_{i} \bigr) \bigl(h^{m} \cdot
\hat{ \eta}_{i}^{m} \bigr) \bigr]
\nonumber
\\
&\geq& M \sqrt{m/n} \En \bigl[ f_{Y|X} \bigl(Q_{Y|X}(u \mid
X_{i}) \mid X_{i} \bigr) \bigl(h^{m} \cdot\hat{
\eta}_{i}^{m} \bigr)^{2} \bigr]
\nonumber
\\
&&{} -C \En \bigl[ \bigl| \hat{r}_{i}(u) \bigl(h^{m} \cdot\hat{
\eta}_{i}^{m} \bigr) \bigr| \bigr] - C \En \bigl[ \bigl(\hat{
\eta}_{i}^{m} \cdot\delta^{m} +
\hat{r}_{i}(u) \bigr)^{2} \bigl| h^{m} \cdot\hat{
\eta}_{i}^{m} \bigr| \bigr]
\nonumber
\\[-8pt]
\\[-8pt]
\nonumber
&\geq& M \sqrt{m/n} \En \bigl[ f_{Y|X} \bigl(Q_{Y|X}(u
\mid X_{i}) \mid X_{i} \bigr) \bigl(h^{m} \cdot
\hat{ \eta}_{i}^{m} \bigr)^{2} \bigr]
\nonumber
\\
& &{}-C \bigl(\En \bigl[ \hat{r}^{2}_{i}(u) \bigr]
\bigr)^{1/2} \bigl(\En \bigl[ \bigl(h^{m} \cdot\hat{
\eta}_{i}^{m} \bigr)^{2} \bigr]
\bigr)^{1/2}
\nonumber
\\
&&{}- C M^{2} (m/n) \Bigl(\max_{1 \leq i \leq n} \bigl\| \hat{
\eta}_{i}^{m} \bigr\|_{\ell^{2}} \Bigr) \En \bigl[
\bigl(h^{m} \cdot\hat{\eta}_{i}^{m}
\bigr)^{2} \bigr]
\nonumber
\\
&&{} - C \Bigl( \max_{1 \leq i \leq n} \bigl\| \hat{\eta}_{i}^{m}
\bigr\|_{\ell^{2}} \Bigr) \En \bigl[\hat{r}^{2}_{i}(u) \bigr],\nonumber
\end{eqnarray}
where we have used the fact that $f_{Y|X}(y|X) \vee| f'_{Y|X}(y | X) |
\leq C$. By assumption~(A5), there exists a small constant $c_{1} > 0$
such that
\[
C \En \bigl[f_{Y|X} \bigl(Q_{Y|X}(u \mid X_{i})\bigr)
\bigl(h^{m} \cdot\hat{\eta}_{i}^{m}
\bigr)^{2} \bigr] \geq c_{1} \En \bigl[ \bigl(h^{m}
\cdot\hat{\eta}_{i}^{m} \bigr)^{2} \bigr] \qquad
\forall u \in\U, \forall h^{m} \in\mathbb{S}^{m}.
\]
We shall show in Appendix D that
%
%
\begin{eqnarray}
\sup_{h^{m} \in\mathbb{S}^{m}} \bigl|\En \bigl[ \bigl(h^{m} \cdot\hat {\eta
}_{i}^{m} \bigr)^{2} \bigr] - 1 \bigr| &=&
o_{P}(1), \label{matrix}
\\
\sup_{u \in\U} \En \bigl[\hat{r}^{2}_{i}(u) \bigr]& =&
O_{P} \bigl(mn^{-1} \bigr). \label{bias}
\end{eqnarray}
Thus, by (\ref{max}), (\ref{moment}), (\ref{matrix}) and (\ref{bias}),
we have
\[
(\ref{lbound}) \geq c_{1}M \sqrt{m/n} \bigl(1 - o_{P}(1)
\bigr) - O_{P}(\sqrt{m/n}) - M^{2} o_{P}(
\sqrt{m/n}),
\]
where the stochastic orders are evaluated uniformly in $u \in\U$ and
$h^{m} \in\mathbb{S}^{m}$.

Bounding $\mathit{III}$:
Let $\sigma_{1},\ldots,\sigma_{n}$ be independent Rademacher random
variables independent of the data $(Y_{1},X_{1}),\ldots,(Y_{n},X_{n})$.
Applying the symmetrization inequality conditional on $X_{1}^{n} := \{
X_{1},\ldots,X_{n} \}$, we have
%
%
\begin{eqnarray}
\label{exp} & &\E \Bigl[ \sup_{u \in\U, h^{m} \in\mathbb{S}^{m}} \bigl| n^{-1/2}
\mathbb{G}_{n | X} \bigl[ \bigl\{ 1 \bigl(Y_{i} \leq\hat{
\eta}_{i}^{m} \cdot \bigl(d^{m}(u) +
\delta^{m} \bigr) \bigr)
\nonumber
\\
&&\hspace*{95pt}\qquad{} - 1 \bigl(Y_{i} \leq Q_{Y|X}(u \mid
X_{i}) \bigr) \bigr\} \bigl(h^{m} \cdot\hat{
\eta}_{i}^{m} \bigr) \bigr] \bigr| \mid X_{1}^{n}
\Bigr]
\nonumber
\\[-8pt]
\\[-8pt]
\nonumber
&&\quad\leq2 \E \Bigl[ \sup_{u \in\U, h^{m} \in\mathbb{S}^{m}} \bigl| \En \bigl[ \sigma_{i}
\bigl\{ 1 \bigl(Y_{i} \leq\hat{\eta}_{i}^{m}
\cdot \bigl(d^{m}(u) + \delta^{m} \bigr) \bigr)
\nonumber
\\
&&\hspace*{96pt}\qquad{} - 1 \bigl(Y_{i} \leq Q_{Y|X}(u \mid
X_{i}) \bigr) \bigr\} \bigl(h^{m} \cdot\hat{
\eta}_{i}^{m} \bigr) \bigr] \bigr| \mid X_{1}^{n}
\Bigr],\nonumber
\end{eqnarray}
where $\delta^{m}$ is taken as $\delta^{m} = M\sqrt{m/n} h^{m}$ in the
suprema. Note that the symmetrization inequality is applicable since
the regular conditional distribution of $(Y_{1},\ldots,Y_{n})'$ given
$X_{1}^{n}$ exists and conditional on $X_{1}^{n}$, $Y_{1},\ldots,Y_{n}$
are independent. Consider the class of functions
\begin{eqnarray*}
\mathcal{G} &=& \bigl\{ \R\times D[0,1] \times\R^{m+1} \ni \bigl(y,x,
\eta^{m} \bigr) \mapsto \bigl\{ 1 \bigl(y \leq\eta^{m} \cdot
\bigl(d^{m}(u) + \delta^{m} \bigr) \bigr)
\\
&&\hspace*{150pt}\qquad{}-1 \bigl(y \leq Q_{Y|X}(u \mid x) \bigr) \bigr\}
\bigl(h^{m} \cdot\eta^{m} \bigr) \dvtx\\
&&\hspace*{175pt}{} u \in\U,
h^{m} \in \mathbb{S}^{m}, \delta^{m} = M
\sqrt{m/n} h^{m} \bigr\}.
\end{eqnarray*}
Then, we have
\[
(\ref{exp}) = 2 \E \Bigl[ \sup_{g \in\mathcal{G}} \bigl| \En \bigl[
\sigma_{i} g \bigl(Y_{i},X_{i},\hat{
\eta}_{i}^{m} \bigr) \bigr] \bigr| \mid X_{1}^{n}
\Bigr].
\]
We apply Proposition E.1 in Appendix E to bound the right side.
Note that $(X_{i},\hat{\eta}_{i}^{m})$ are measurable with respect to
the $\sigma$-field generated by $X_{1}^{n}$, the regular conditional
distribution of $(Y_{1},\ldots,Y_{n})'$ given $X_{1}^{n}$ exists, and
conditional on $X_{1}^{n}$, $Y_{1},\ldots,Y_{n}$ are independent.
Observe that
\[
\sup_{g \in\mathcal{G}} \bigl| g \bigl(Y_{i},X_{i},\hat{
\eta}_{i}^{m} \bigr) \bigr| \leq\max_{1 \leq i \leq n} \bigl\| \hat{
\eta}_{i}^{m} \bigr\|_{\ell^{2}} =: \hat{B},
\]
and, by (\ref{decomp}),
\begin{eqnarray*}
&&\sup_{g \in\mathcal{G}} \En \bigl[ \E \bigl[ g^{2}
\bigl(Y_{i}, X_{i}, \hat{\eta}_{i}^{m}
\bigr) \mid X_{1}^{n} \bigr] \bigr]
\\
&&\qquad= \sup_{u \in\U, h^{m} \in\mathbb{S}^{m}} \En \bigl[ \bigl| F_{Y|X} \bigl( \hat{
\eta}_{i}^{m} \cdot \bigl(d^{m}(u) +
\delta^{m} \bigr) \mid X_{i}\bigr)
\nonumber
\\
&&\hspace*{64pt}\qquad\quad{} - F_{Y|X} \bigl(Q_{Y|X}(u \mid
X_{i}) \mid X_{i} \bigr) \bigr| \bigl(h^{m} \cdot
\hat{\eta}_{i}^{m} \bigr)^{2} \bigr]
\\
&&\qquad\leq\sup_{u \in\U, h^{m} \in\mathbb{S}^{m}} \bigl\{ C M\sqrt {m/n} \En \bigl[ \bigl|
h^{m} \cdot\hat{\eta}_{i}^{m}\bigr |^{3}
\bigr] + C \En \bigl[ \bigl| \hat{r}_{i}(u) \bigr| \bigl(h^{m} \cdot
\hat{ \eta}_{i}^{m} \bigr)^{2} \bigr] \bigr\}
\\
&&\qquad\leq C M \sqrt{m/n} \max_{1 \leq i \leq n} \bigl\| \hat{\eta}_{i}^{m}
\bigr\|_{\ell^{2}} \sup_{h^{m} \in\mathbb{S}^{m}} \En \bigl[ \bigl(h^{m} \cdot
\hat{\eta}_{i}^{m} \bigr)^{2} \bigr]
\\
&&\qquad\quad{} + C \max_{1 \leq i \leq n} \bigl\| \hat{\eta}_{i}^{m}
\bigr\|_{\ell^{2}} \Bigl(\sup_{u \in
\U} \En \bigl[ \hat{r}_{i}^{2}(u)
\bigr] \Bigr)^{1/2} \Bigl(\sup_{h^{m} \in\mathbb{S}^{m}} \En \bigl[
\bigl(h^{m} \cdot\hat{\eta}_{i}^{m}
\bigr)^{2} \bigr] \Bigr)^{1/2}
\\
&&\qquad=: \hat{\tau}^{2}.
\end{eqnarray*}
We shall show in Appendix D
that there exist some constants $c_{2} \geq1$ and $A' \geq3 \sqrt{e}$
such that
%
%
\begin{equation}
N \bigl(\hat{B} \epsilon, \mathcal{G}, L_{2} \bigl(P'_{n}
\bigr) \bigr) \leq \bigl(A'/\epsilon \bigr)^{c_{2}m}, \qquad 0
< \forall\epsilon\leq1, \label{covnumber}
\end{equation}
where $P'_{n}$ denotes the empirical distribution on $\R\times D[0,1]
\times\R^{m+1}$ that assigns probability $n^{-1}$ to each
$(Y_{i},X_{i},\hat{\eta}_{i}^{m}),  i=1,\ldots,n$.\vadjust{\goodbreak}
Therefore, by Proposition E.1 in Appendix E, we conclude that
%
%
\begin{eqnarray}
\label{ebound} &&\E \Bigl[ \sup_{g \in\mathcal{G}} \bigl| \En \bigl[
\sigma_{i} g \bigl(Y_{i},X_{i},\hat{
\eta}_{i}^{m} \bigr) \bigr] \bigr| \mid X_{1}^{n}
\Bigr]
\nonumber
\\[-8pt]
\\[-8pt]
\nonumber
&&\qquad\leq1(\hat{\tau} > 0) D' \biggl[ \sqrt{
\frac{c_{2} m \hat{\tau
}^{2}}{n}} \sqrt{ \log\frac{A' \hat{B}}{\hat{\tau}}} + \frac{c_{2} m
\hat{B}}{n} \log
\frac{ A' \hat{B}}{\hat{\tau}} \biggr],
\end{eqnarray}
provided that $\hat{\tau} \leq\hat{B}$, where $D'$ is a universal constant.

By (\ref{max}), (\ref{moment}) and (\ref{matrix}), and the fact that
$M>1$, we have
\[
\hat{B} = o_{P} \bigl\{ (\log n)^{-1} \sqrt{n/m} \bigr\},
\qquad \hat{\tau}^{2} = M o_{P} \bigl\{ (\log
n)^{-1} \bigr\},
\]
and there exists a small constant $c_{3} > 0$ such that with
probability approaching one
\[
\hat{\tau}^{2} \geq c_{3} \hat{B} \sqrt{m/n}.
\]
Thus, replacing $\hat{\tau}$ by $\hat{\tau} \wedge\hat{B}$ if
necessary, $(\ref{ebound}) = M^{1/2} o_{P}(\sqrt{m/n})$, by which we
conclude that
\[
\mathit{III} \geq- M^{1/2} o_{P}(\sqrt{m/n}),
\]
where the stochastic order is evaluated uniformly in $u \in\U$ and
$h^{m} \in\mathbb{S}^{m}$.

Taking these together, we now conclude that
\[
I+\mathit{II}+\mathit{III} \geq c_{1} M \sqrt{m/n} \bigl(1-o_{P}(1) \bigr) -
O_{P}(\sqrt{m/n}) - M^{2} o_{P} (\sqrt{m/n}),
\]
where the stochastic orders are evaluated uniformly in $u \in\U$ and
$h^{m} \in\mathbb{S}^{m}$. This immediately implies the hypothesis of
Lemma~\ref{lemred}.

\subsection{Completion of the proof}\label{sec5.3}

We have shown that
\[
\sup_{u \in\U} \bigl\| \hat{d}^{m}(u) - d^{m}(u)
\bigr\|_{\ell^{2}} = O_{P}(\sqrt{m/n}).
\]
Since $\| \hat{d}^{m}(u) - d^{m}(u) \|_{\ell^{2}}^{2} = \sum_{j=0}^{m}
\kappa_{j} (\hat{b}_{j}(u) -  b_{j}(u))^{2} \geq\kappa_{m} \sum_{j=0}^{m} (\hat{b}_{j}(u) - b_{j}(u))^{2} = \kappa_{m} \| \hat
{b}^{m}(u) - b^{m}(u) \|_{\ell^{2}}^{2}$ (with $\kappa_{0} := 1$), we have
\begin{eqnarray*}
\sup_{u \in\U} \bigl\| \hat{b}^{m}(u) - b^{m}(u)
\bigr\|_{\ell^{2}}^{2} &=& O_{P} \bigl(\kappa_{m}^{-1}
m n^{-1} \bigr) = O_{P} \bigl(m^{\alpha+1}n^{-1}
\bigr)
\\
&=& O_{P} \bigl(n^{-(2\beta-1)/(\alpha+2\beta)} \bigr).
\end{eqnarray*}
Observe that\vspace*{-1pt}
\begin{eqnarray*}
&&\hat{b}(t,u) - b(t,u)
\\[-1pt]
&&\qquad= \sum_{j=1}^{m}
\hat{b}_{j}(u) \hat{\phi}_{j}(t) - \sum
_{j=1}^{m} b_{j}(u) \hat{
\phi}_{j}(t) + \sum_{j=1}^{m}
b_{j}(u) \hat{ \phi}_{j}(t)
\\[-1pt]
& &\qquad\quad{}- \sum_{j=1}^{m}
b_{j}(u) \phi_{j}(t) - \sum_{j=m+1}^{\infty}
b_{j}(u) \phi_{j}(t)
\\[-1pt]
&&\qquad=\sum_{j=1}^{m} (
\hat{b}_{j} - b_{j}) (u) \hat{\phi}_{j}(t) + \sum
_{j=1}^{m} b_{j}(u) \bigl(\hat{
\phi}_{j}(t) - \phi_{j}(t) \bigr) - \sum
_{j=m+1}^{\infty} b_{j}(u) \phi_{j}(t),\vspace*{-1pt}
\end{eqnarray*}
so that, uniformly in $u \in\U$,\vspace*{-1pt}
\begin{eqnarray*}
&&\int_{0}^{1} \bigl(\hat{b}(t,u) - b(t,u)
\bigr)^{2} \,dt
\\[-1pt]
&&\qquad\leq3 \bigl\| \hat{b}^{m}(u) - b^{m}(u)
\bigr\|_{\ell^{2}}^{2} + 3m \sum_{j=1}^{m}
b_{j}^{2}(u) \| \hat{\phi}_{j} -
\phi_{j} \|^{2} + 3 \sum_{j=m+1}^{\infty}
b_{j}^{2} (u)
\\[-1pt]
&&\qquad=O_{P} \bigl(n^{-(2\beta-1)/(\alpha+2\beta)} \bigr) + 3m \sum
_{j=1}^{m} b_{j}^{2} (u) \|
\hat{\phi}_{j} - \phi_{j} \|^{2} + O
\bigl(m^{-2\beta+1} \bigr)
\\[-1pt]
&&\qquad=O_{P} \bigl(n^{-(2\beta-1)/(\alpha+2\beta)} \bigr) + 3m \sum
_{j=1}^{m} b_{j}^{2} (u) \|
\hat{\phi}_{j} - \phi_{j} \|^{2}.
\end{eqnarray*}
By Lemmas
D.1 and D.2 together with the computation in the proof of (\ref{bias}) in Appendix D, we see that\vspace*{-1pt}
\begin{eqnarray*}
m \sum_{j=1}^{m} b_{j}^{2}(u)
\| \hat{\phi}_{j} - \phi_{j} \|^{2} &\leq& C m
\sum_{j=1}^{m} j^{-2\beta} \| \hat{
\phi}_{j} - \phi_{j} \|^{2}
\\
& = &O_{P} \bigl(m \bigl(n^{-1} + \Delta^{\gamma} +
n^{-1}(\log n) m^{3\alpha+3} \Delta^{\gamma} \bigr) \bigr) = O
\bigl(mn^{-1} \bigr)
\\
&=& o_{P} \bigl(n^{-(2\beta-1)/(\alpha+2\beta)} \bigr).
\end{eqnarray*}
This completes the proof of Theorem~\ref{thm1}.

\section*{Acknowledgments}
Most of this work was done while the author was visiting the Department
of Economics, MIT. He would like to thank Professors Victor
Chernozhukov, Hidehiko Ichimura and Roger Koenker for their suggestions
and encouragements.
He also would like to thank the Editor, Professor T. Tony Cai, the
Associate Editor and three anonymous referees for constructive comments
that helped to improve the quality of the paper.

\begin{supplement}
\stitle{Supplement to ``Estimation in functional linear quantile regression''}\label{suppA}
\slink[doi]{10.1214/12-AOS1066SUPP} 
\sdatatype{.pdf}
\sfilename{aos1066\_supp.pdf}
\sdescription{This supplementary file contains the additional
discussion on the connection to nonlinear ill-posed inverse problems,
technical proofs omitted in the main body, some useful technical tools
and additional simulation results.}
\end{supplement}

\printaddresses


\begin{thebibliography}{38}

\bibitem[\protect\citeauthoryear{Barlow et~al.}{1972}]{BBBB72}
%
\begin{bbook}[auto:STB|2013/01/04|07:53:31]
\bauthor{\bsnm{Barlow},~\bfnm{R.}\binits{R.}},
\bauthor{\bsnm{Bartholenew},~\bfnm{J.}\binits{J.}},
\bauthor{\bsnm{Bremer},~\bfnm{J.}\binits{J.}} \AND
\bauthor{\bsnm{Brunk},~\bfnm{H.}\binits{H.}}
(\byear{1972}).
\btitle{Statistical Inference Under Order Restrictions}.
\bpublisher{Wiley}, \blocation{New York}.
\bptok{imsref}%
\end{bbook}
%
\endbibitem

\bibitem[\protect\citeauthoryear{Belloni, Chernozhukov and
Fern{\'a}ndez-Val}{2011}]{BCF11}
%
\begin{bmisc}[auto:STB|2013/01/04|07:53:31]
\bauthor{\bsnm{Belloni},~\bfnm{A.}\binits{A.}},
\bauthor{\bsnm{Chernozhukov},~\bfnm{V.}\binits{V.}} \AND
\bauthor{\bsnm{Fern{\'a}ndez-Val},~\bfnm{I.}\binits{I.}}
(\byear{2011}).
\bhowpublished{Conditional quantile processes based on series or many
regressors. Available at arXiv:\arxivurl{1105.6154}}.
\bptok{imsref}%
\end{bmisc}
%
\endbibitem

\bibitem[\protect\citeauthoryear{Billingsley}{1968}]{B68}
%
\begin{bbook}[mr]
\bauthor{\bsnm{Billingsley},~\bfnm{Patrick}\binits{P.}}
(\byear{1968}).
\btitle{Convergence of Probability Measures}.
\bpublisher{Wiley}, \blocation{New York}.
\bid{mr={0233396}}
\bptok{imsref}%
\end{bbook}
%
\endbibitem

\bibitem[\protect\citeauthoryear{Bissantz, Hohage and Munk}{2004}]{BHM04}
%
\begin{barticle}[mr]
\bauthor{\bsnm{Bissantz},~\bfnm{Nicolai}\binits{N.}},
\bauthor{\bsnm{Hohage},~\bfnm{Thorsten}\binits{T.}} \AND
\bauthor{\bsnm{Munk},~\bfnm{Axel}\binits{A.}}
(\byear{2004}).
\btitle{Consistency and rates of convergence of nonlinear {T}ikhonov
regularization with random noise}.
\bjournal{Inverse Problems}
\bvolume{20}
\bpages{1773--1789}.
\bid{doi={10.1088/0266-5611/20/6/005}, issn={0266-5611}, mr={2107236}}
\bptok{imsref}%
\end{barticle}
%
\endbibitem

\bibitem[\protect\citeauthoryear{Cai and Hall}{2006}]{CH06}
%
\begin{barticle}[mr]
\bauthor{\bsnm{Cai},~\bfnm{T.~Tony}\binits{T.~T.}} \AND
\bauthor{\bsnm{Hall},~\bfnm{Peter}\binits{P.}}
(\byear{2006}).
\btitle{Prediction in functional linear regression}.
\bjournal{Ann. Statist.}
\bvolume{34}
\bpages{2159--2179}.
\bid{doi={10.1214/009053606000000830}, issn={0090-5364}, mr={2291496}}
\bptok{imsref}%
\end{barticle}
%
\endbibitem

\bibitem[\protect\citeauthoryear{Cardot, Crambes and Sarda}{2005}]{CCS05}
%
\begin{barticle}[mr]
\bauthor{\bsnm{Cardot},~\bfnm{Herv{\'e}}\binits{H.}},
\bauthor{\bsnm{Crambes},~\bfnm{Christophe}\binits{C.}} \AND
\bauthor{\bsnm{Sarda},~\bfnm{Pascal}\binits{P.}}
(\byear{2005}).
\btitle{Quantile regression when the covariates are functions}.
\bjournal{J. Nonparametr. Stat.}
\bvolume{17}
\bpages{841--856}.
\bid{doi={10.1080/10485250500303015}, issn={1048-5252}, mr={2180369}}
\bptok{imsref}%
\end{barticle}
%
\endbibitem

\bibitem[\protect\citeauthoryear{Cardot, Crambes and Sarda}{2007}]{CCS07}
%
\begin{bincollection}[mr]
\bauthor{\bsnm{Cardot},~\bfnm{Herv{\'e}}\binits{H.}},
\bauthor{\bsnm{Crambes},~\bfnm{Christophe}\binits{C.}} \AND
\bauthor{\bsnm{Sarda},~\bfnm{Pascal}\binits{P.}}
(\byear{2007}).
\btitle{Ozone pollution forecasting using conditional mean and conditional
quantiles with functional covariates}.
In \bbooktitle{Statistical Methods for Biostatistics and Related Fields}
(\beditor{\binits{W.}\bfnm{W.} \bsnm{H\"{a}rdle}},
\beditor{\binits{Y.}\bfnm{Y.} \bsnm{Mori}}
\AND
\beditor{\binits{P.}\bfnm{P.} \bsnm{Vieu}}, eds.)
\bpages{221--243}.
\bpublisher{Springer}, \blocation{Berlin}.
\bid{doi={10.1007/978-3-540-32691-5_12}, mr={2376412}}
\bptok{imsref}%
\end{bincollection}
%
\endbibitem

\bibitem[\protect\citeauthoryear{Cardot, Ferraty and Sarda}{1999}]{CFS99}
%
\begin{barticle}[mr]
\bauthor{\bsnm{Cardot},~\bfnm{Herv{\'e}}\binits{H.}},
\bauthor{\bsnm{Ferraty},~\bfnm{Fr{\'e}d{\'e}ric}\binits{F.}} \AND
\bauthor{\bsnm{Sarda},~\bfnm{Pascal}\binits{P.}}
(\byear{1999}).
\btitle{Functional linear model}.
\bjournal{Statist. Probab. Lett.}
\bvolume{45}
\bpages{11--22}.
\bid{doi={10.1016/S0167-7152(99)00036-X}, issn={0167-7152}, mr={1718346}}
\bptok{imsref}%
\end{barticle}
%
\endbibitem

\bibitem[\protect\citeauthoryear{Cardot, Ferraty and Sarda}{2003}]{CFS03}
%
\begin{barticle}[mr]
\bauthor{\bsnm{Cardot},~\bfnm{Herv{\'e}}\binits{H.}},
\bauthor{\bsnm{Ferraty},~\bfnm{Fr{\'e}d{\'e}ric}\binits{F.}} \AND
\bauthor{\bsnm{Sarda},~\bfnm{Pascal}\binits{P.}}
(\byear{2003}).
\btitle{Spline estimators for the functional linear model}.
\bjournal{Statist. Sinica}
\bvolume{13}
\bpages{571--591}.
\bid{issn={1017-0405}, mr={1997162}}
\bptok{imsref}%
\end{barticle}
%
\endbibitem

\bibitem[\protect\citeauthoryear{Chen and M{\"u}ller}{2012}]{CM11}
%
\begin{barticle}[mr]
\bauthor{\bsnm{Chen},~\bfnm{Kehui}\binits{K.}} \AND
\bauthor{\bsnm{M{\"u}ller},~\bfnm{Hans-Georg}\binits{H.-G.}}
(\byear{2012}).
\btitle{Conditional quantile analysis when covariates are functions, with
application to growth data}.
\bjournal{J. R. Stat. Soc. Ser. B Stat. Methodol.}
\bvolume{74}
\bpages{67--89}.
\bid{doi={10.1111/j.1467-9868.2011.01008.x}, issn={1369-7412}, mr={2885840}}
\bptok{imsref}%
\end{barticle}
%
\endbibitem

\bibitem[\protect\citeauthoryear{Chen and Pouzo}{2012}]{CP11}
%
\begin{barticle}[mr]
\bauthor{\bsnm{Chen},~\bfnm{Xiaohong}\binits{X.}} \AND
\bauthor{\bsnm{Pouzo},~\bfnm{Demian}\binits{D.}}
(\byear{2012}).
\btitle{Estimation of nonparametric conditional moment models with possibly
nonsmooth generalized residuals}.
\bjournal{Econometrica}
\bvolume{80}
\bpages{277--321}.
\bid{doi={10.3982/ECTA7888}, issn={0012-9682}, mr={2920758}}
\bptok{imsref}%
\end{barticle}
%
\endbibitem

\bibitem[\protect\citeauthoryear{Chernozhukov, Fern{\'a}ndez-Val and
Galichon}{2009}]{CFG09}
%
\begin{barticle}[mr]
\bauthor{\bsnm{Chernozhukov},~\bfnm{V.}\binits{V.}},
\bauthor{\bsnm{Fern{\'a}ndez-Val},~\bfnm{I.}\binits{I.}} \AND
\bauthor{\bsnm{Galichon},~\bfnm{A.}\binits{A.}}
(\byear{2009}).
\btitle{Improving point and interval estimators of monotone functions by
rearrangement}.
\bjournal{Biometrika}
\bvolume{96}
\bpages{559--575}.
\bid{doi={10.1093/biomet/asp030}, issn={0006-3444}, mr={2538757}}
\bptok{imsref}%
\end{barticle}
%
\endbibitem

\bibitem[\protect\citeauthoryear{Crambes, Kneip and Sarda}{2009}]{CKS09}
%
\begin{barticle}[mr]
\bauthor{\bsnm{Crambes},~\bfnm{Christophe}\binits{C.}},
\bauthor{\bsnm{Kneip},~\bfnm{Alois}\binits{A.}} \AND
\bauthor{\bsnm{Sarda},~\bfnm{Pascal}\binits{P.}}
(\byear{2009}).
\btitle{Smoothing splines estimators for functional linear regression}.
\bjournal{Ann. Statist.}
\bvolume{37}
\bpages{35--72}.
\bid{doi={10.1214/07-AOS563}, issn={0090-5364}, mr={2488344}}
\bptok{imsref}%
\end{barticle}
%
\endbibitem

\bibitem[\protect\citeauthoryear{Delaigle and Hall}{2012}]{DH11}
%
\begin{barticle}[auto:STB|2013/01/04|07:53:31]
\bauthor{\bsnm{Delaigle},~\bfnm{A.}\binits{A.}} \AND
\bauthor{\bsnm{Hall},~\bfnm{P.}\binits{P.}}
(\byear{2012}).
\btitle{Methodology and theory for partial least squares applied to functional
data}.
\bjournal{Ann. Statist.}
\bvolume{40}
\bpages{322--352}.
\bptok{imsref}%
\end{barticle}
%
\endbibitem

\bibitem[\protect\citeauthoryear{Doornik}{2002}]{D02}
%
\begin{bbook}[auto:STB|2013/01/04|07:53:31]
\bauthor{\bsnm{Doornik},~\bfnm{J.~A.}\binits{J.~A.}}
(\byear{2002}).
\btitle{Object-Oriented Matrix Programming Using Ox},
\bedition{3rd} ed.
\bpublisher{Timberlake Consultants Press}, \blocation{London}.
\bptok{imsref}%
\end{bbook}
%
\endbibitem

\bibitem[\protect\citeauthoryear{Dudley}{2002}]{Du02}
%
\begin{bbook}[mr]
\bauthor{\bsnm{Dudley},~\bfnm{R.~M.}\binits{R.~M.}}
(\byear{2002}).
\btitle{Real Analysis and Probability}.
\bseries{Cambridge Studies in Advanced Mathematics}
\bvolume{74}.
\bpublisher{Cambridge Univ. Press}, \blocation{Cambridge}.
\bid{doi={10.1017/CBO9780511755347}, mr={1932358}}
\bptok{imsref}%
\end{bbook}
%
\endbibitem

\bibitem[\protect\citeauthoryear{El-Attar, Vidyasagar and Dutta}{1979}]{EVD79}
%
\begin{barticle}[mr]
\bauthor{\bsnm{El-Attar},~\bfnm{R.~A.}\binits{R.~A.}},
\bauthor{\bsnm{Vidyasagar},~\bfnm{M.}\binits{M.}} \AND
\bauthor{\bsnm{Dutta},~\bfnm{S.~R.~K.}\binits{S.~R.~K.}}
(\byear{1979}).
\btitle{An algorithm for {$l_{1}$}-norm minimization with application to
nonlinear {$l_{1}$}-approximation}.
\bjournal{SIAM J. Numer. Anal.}
\bvolume{16}
\bpages{70--86}.
\bid{doi={10.1137/0716006}, issn={0036-1429}, mr={0518685}}
\bptok{imsref}%
\end{barticle}
%
\endbibitem

\bibitem[\protect\citeauthoryear{Ferraty, Rabhi and Vieu}{2005}]{FRV05}
%
\begin{barticle}[mr]
\bauthor{\bsnm{Ferraty},~\bfnm{Fr{\'e}d{\'e}ric}\binits{F.}},
\bauthor{\bsnm{Rabhi},~\bfnm{Abbes}\binits{A.}} \AND
\bauthor{\bsnm{Vieu},~\bfnm{Philippe}\binits{P.}}
(\byear{2005}).
\btitle{Conditional quantiles for dependent functional data with
application to
the climatic {E}l {N}i\~no phenomenon}.
\bjournal{Sankhy\=a}
\bvolume{67}
\bpages{378--398}.
\bid{issn={0972-7671}, mr={2208895}}
\bptok{imsref}%
\end{barticle}
%
\endbibitem

\bibitem[\protect\citeauthoryear{Ferreira and Menegatto}{2009}]{FM09}
%
\begin{barticle}[mr]
\bauthor{\bsnm{Ferreira},~\bfnm{J.~C.}\binits{J.~C.}} \AND
\bauthor{\bsnm{Menegatto},~\bfnm{V.~A.}\binits{V.~A.}}
(\byear{2009}).
\btitle{Eigenvalues of integral operators defined by smooth positive definite
kernels}.
\bjournal{Integral Equations Operator Theory}
\bvolume{64}
\bpages{61--81}.
\bid{doi={10.1007/s00020-009-1680-3}, issn={0378-620X}, mr={2501172}}
\bptok{imsref}%
\end{barticle}
%
\endbibitem

\bibitem[\protect\citeauthoryear{Gagliardini and Scaillet}{2012}]{GS11}
%
\begin{barticle}[auto:STB|2013/01/04|07:53:31]
\bauthor{\bsnm{Gagliardini},~\bfnm{P.}\binits{P.}} \AND
\bauthor{\bsnm{Scaillet},~\bfnm{O.}\binits{O.}}
(\byear{2012}).
\btitle{Nonparametric instrumental variables estimation of
structural quantile effects}.
\bjournal{Econometrica}
\bvolume{80}
\bpages{1533--1562}.
\bptok{imsref}%
\end{barticle}
%
\endbibitem

\bibitem[\protect\citeauthoryear{Hall and Horowitz}{2007}]{HH07}
%
\begin{barticle}[mr]
\bauthor{\bsnm{Hall},~\bfnm{Peter}\binits{P.}} \AND
\bauthor{\bsnm{Horowitz},~\bfnm{Joel~L.}\binits{J.~L.}}
(\byear{2007}).
\btitle{Methodology and convergence rates for functional linear regression}.
\bjournal{Ann. Statist.}
\bvolume{35}
\bpages{70--91}.
\bid{doi={10.1214/009053606000000957}, issn={0090-5364}, mr={2332269}}
\bptok{imsref}%
\end{barticle}
%
\endbibitem

\bibitem[\protect\citeauthoryear{He and Shao}{2000}]{HS00}
%
\begin{barticle}[mr]
\bauthor{\bsnm{He},~\bfnm{Xuming}\binits{X.}} \AND
\bauthor{\bsnm{Shao},~\bfnm{Qi-Man}\binits{Q.-M.}}
(\byear{2000}).
\btitle{On parameters of increasing dimensions}.
\bjournal{J. Multivariate Anal.}
\bvolume{73}
\bpages{120--135}.
\bid{doi={10.1006/jmva.1999.1873}, issn={0047-259X}, mr={1766124}}
\bptok{imsref}%
\end{barticle}
%
\endbibitem

\bibitem[\protect\citeauthoryear{H{\"o}rmann and Kokoszka}{2010}]{HK10}
%
\begin{barticle}[mr]
\bauthor{\bsnm{H{\"o}rmann},~\bfnm{Siegfried}\binits{S.}} \AND
\bauthor{\bsnm{Kokoszka},~\bfnm{Piotr}\binits{P.}}
(\byear{2010}).
\btitle{Weakly dependent functional data}.
\bjournal{Ann. Statist.}
\bvolume{38}
\bpages{1845--1884}.
\bid{doi={10.1214/09-AOS768}, issn={0090-5364}, mr={2662361}}
\bptok{imsref}%
\end{barticle}
%
\endbibitem

\bibitem[\protect\citeauthoryear{Horowitz and Lee}{2007}]{HL07}
%
\begin{barticle}[mr]
\bauthor{\bsnm{Horowitz},~\bfnm{Joel~L.}\binits{J.~L.}} \AND
\bauthor{\bsnm{Lee},~\bfnm{Sokbae}\binits{S.}}
(\byear{2007}).
\btitle{Nonparametric instrumental variables estimation of a quantile
regression model}.
\bjournal{Econometrica}
\bvolume{75}
\bpages{1191--1208}.
\bid{doi={10.1111/j.1468-0262.2007.00786.x}, issn={0012-9682}, mr={2333498}}
\bptok{imsref}%
\end{barticle}
%
\endbibitem

\bibitem[\protect\citeauthoryear{James, Wang and Zhu}{2009}]{JWZ09}
%
\begin{barticle}[mr]
\bauthor{\bsnm{James},~\bfnm{Gareth~M.}\binits{G.~M.}},
\bauthor{\bsnm{Wang},~\bfnm{Jing}\binits{J.}} \AND
\bauthor{\bsnm{Zhu},~\bfnm{Ji}\binits{J.}}
(\byear{2009}).
\btitle{Functional linear regression that's interpretable}.
\bjournal{Ann. Statist.}
\bvolume{37}
\bpages{2083--2108}.
\bid{doi={10.1214/08-AOS641}, issn={0090-5364}, mr={2543686}}
\bptok{imsref}%
\end{barticle}
%
\endbibitem

\bibitem[\protect\citeauthoryear{Kato}{2013}]{supp}
\begin{bmisc}[auto]
\bauthor{\bsnm{Kato},~\bfnm{Kengo}\binits{K.}}
(\byear{2013}).
\bhowpublished{Supplement to ``Estimation in functional linear quantile regression.''
DOI:\doiurl{10.1214/12-AOS1066SUPP}.}
\bptok{imsref}%
\end{bmisc}
%
\endbibitem


\bibitem[\protect\citeauthoryear{Koenker}{2005}]{K05}
%
\begin{bbook}[mr]
\bauthor{\bsnm{Koenker},~\bfnm{Roger}\binits{R.}}
(\byear{2005}).
\btitle{Quantile Regression}.
\bseries{Econometric Society Monographs}
\bvolume{38}.
\bpublisher{Cambridge Univ. Press}, \blocation{Cambridge}.
\bid{doi={10.1017/CBO9780511754098}, mr={2268657}}
\bptok{imsref}%
\end{bbook}
%
\endbibitem

\bibitem[\protect\citeauthoryear{Koenker and Bassett}{1978}]{KB78}
%
\begin{barticle}[mr]
\bauthor{\bsnm{Koenker},~\bfnm{Roger}\binits{R.}} \AND
\bauthor{\bsnm{Bassett},~\bfnm{Gilbert}\binits{G.} \bsuffix{Jr.}}
(\byear{1978}).
\btitle{Regression quantiles}.
\bjournal{Econometrica}
\bvolume{46}
\bpages{33--50}.
\bid{issn={0012-9682}, mr={0474644}}
\bptok{imsref}%
\end{barticle}
%
\endbibitem

\bibitem[\protect\citeauthoryear{Loubes and Lude{\~n}a}{2010}]{LC10}
%
\begin{barticle}[mr]
\bauthor{\bsnm{Loubes},~\bfnm{Jean-Michel}\binits{J.-M.}} \AND
\bauthor{\bsnm{Lude{\~n}a},~\bfnm{Carenne}\binits{C.}}
(\byear{2010}).
\btitle{Penalized estimators for non linear inverse problems}.
\bjournal{ESAIM Probab. Stat.}
\bvolume{14}
\bpages{173--191}.
\bid{doi={10.1051/ps:2008024}, issn={1292-8100}, mr={2741964}}
\bptok{imsref}%
\end{barticle}
%
\endbibitem

\bibitem[\protect\citeauthoryear{Milnor}{1965}]{M65}
%
\begin{bbook}[mr]
\bauthor{\bsnm{Milnor},~\bfnm{John~W.}\binits{J.~W.}}
(\byear{1965}).
\btitle{Topology from the Differentiable Viewpoint}.
\bpublisher{Univ. Press of Virginia},
\blocation{Charlottesville, VA}.
\bid{mr={0226651}}
\bptok{imsref}%
\end{bbook}
%
\endbibitem

\bibitem[\protect\citeauthoryear{M{\"u}ller and Stadtm{\"u}ller}{2005}]{MS05}
%
\begin{barticle}[mr]
\bauthor{\bsnm{M{\"u}ller},~\bfnm{Hans-Georg}\binits{H.-G.}} \AND
\bauthor{\bsnm{Stadtm{\"u}ller},~\bfnm{Ulrich}\binits{U.}}
(\byear{2005}).
\btitle{Generalized functional linear models}.
\bjournal{Ann. Statist.}
\bvolume{33}
\bpages{774--805}.
\bid{doi={10.1214/009053604000001156}, issn={0090-5364}, mr={2163159}}
\bptok{imsref}%
\end{barticle}
%
\endbibitem

\bibitem[\protect\citeauthoryear{Portnoy and Koenker}{1997}]{KP97}
%
\begin{barticle}[mr]
\bauthor{\bsnm{Portnoy},~\bfnm{Stephen}\binits{S.}} \AND
\bauthor{\bsnm{Koenker},~\bfnm{Roger}\binits{R.}}
(\byear{1997}).
\btitle{The {G}aussian hare and the {L}aplacian tortoise:
Computability of
squared-error versus absolute-error estimators}.
\bjournal{Statist. Sci.}
\bvolume{12}
\bpages{279--300}.
\bid{doi={10.1214/ss/1030037960}, issn={0883-4237}, mr={1619189}}
\bptnote{check related}%
\bptok{imsref}%
\end{barticle}
%
\endbibitem

\bibitem[\protect\citeauthoryear{Ramsay and Silverman}{2005}]{RS05}
%
\begin{bbook}[mr]
\bauthor{\bsnm{Ramsay},~\bfnm{J.~O.}\binits{J.~O.}} \AND
\bauthor{\bsnm{Silverman},~\bfnm{B.~W.}\binits{B.~W.}}
(\byear{2005}).
\btitle{Functional Data Analysis},
\bedition{2nd} ed.
\bpublisher{Springer}, \blocation{New York}.
\bid{mr={2168993}}
\bptok{imsref}%
\end{bbook}
%
\endbibitem

\bibitem[\protect\citeauthoryear{Ritter, Wasilkowski and
Wo{\'z}niakowski}{1995}]{RWW95}
%
\begin{barticle}[mr]
\bauthor{\bsnm{Ritter},~\bfnm{Klaus}\binits{K.}},
\bauthor{\bsnm{Wasilkowski},~\bfnm{Grzegorz~W.}\binits{G.~W.}} \AND
\bauthor{\bsnm{Wo{\'z}niakowski},~\bfnm{Henryk}\binits{H.}}
(\byear{1995}).
\btitle{Multivariate integration and approximation for random fields satisfying
{S}acks--{Y}lvisaker conditions}.
\bjournal{Ann. Appl. Probab.}
\bvolume{5}
\bpages{518--540}.
\bid{issn={1050-5164}, mr={1336881}}
\bptok{imsref}%
\end{barticle}
%
\endbibitem

\bibitem[\protect\citeauthoryear{van~der Vaart and Wellner}{1996}]{VW96}
%
\begin{bbook}[mr]
\bauthor{\bparticle{van~der} \bsnm{Vaart},~\bfnm{Aad~W.}\binits
{A.~W.}} \AND
\bauthor{\bsnm{Wellner},~\bfnm{Jon~A.}\binits{J.~A.}}
(\byear{1996}).
\btitle{Weak Convergence and Empirical Processes: With Applications to
Statistics}.
\bpublisher{Springer}, \blocation{New York}.
\bid{mr={1385671}}
\bptok{imsref}%
\end{bbook}
%
\endbibitem

\bibitem[\protect\citeauthoryear{Yao, M{\"u}ller and Wang}{2005}]{YMW05}
%
\begin{barticle}[mr]
\bauthor{\bsnm{Yao},~\bfnm{Fang}\binits{F.}},
\bauthor{\bsnm{M{\"u}ller},~\bfnm{Hans-Georg}\binits{H.-G.}} \AND
\bauthor{\bsnm{Wang},~\bfnm{Jane-Ling}\binits{J.-L.}}
(\byear{2005}).
\btitle{Functional linear regression analysis for longitudinal data}.
\bjournal{Ann. Statist.}
\bvolume{33}
\bpages{2873--2903}.
\bid{doi={10.1214/009053605000000660}, issn={0090-5364}, mr={2253106}}
\bptok{imsref}%
\end{barticle}
%
\endbibitem

\bibitem[\protect\citeauthoryear{Yuan}{2006}]{Y06}
%
\begin{barticle}[mr]
\bauthor{\bsnm{Yuan},~\bfnm{Ming}\binits{M.}}
(\byear{2006}).
\btitle{G{ACV} for quantile smoothing splines}.
\bjournal{Comput. Statist. Data Anal.}
\bvolume{50}
\bpages{813--829}.
\bid{doi={10.1016/j.csda.2004.10.008}, issn={0167-9473}, mr={2207010}}
\bptok{imsref}%
\end{barticle}\
%
\endbibitem

\bibitem[\protect\citeauthoryear{Yuan and Cai}{2010}]{YC10}
%
\begin{barticle}[mr]
\bauthor{\bsnm{Yuan},~\bfnm{Ming}\binits{M.}} \AND
\bauthor{\bsnm{Cai},~\bfnm{T.~Tony}\binits{T.~T.}}
(\byear{2010}).
\btitle{A reproducing kernel {H}ilbert space approach to functional linear
regression}.
\bjournal{Ann. Statist.}
\bvolume{38}
\bpages{3412--3444}.
\bid{doi={10.1214/09-AOS772}, issn={0090-5364}, mr={2766857}}
\bptok{imsref}%
\end{barticle}
%
\endbibitem

\end{thebibliography}
\end{document}